\newtheorem{theorem}{Theorem}[section]
\newtheorem{lemma}[theorem]{Lemma}
\newtheorem{proposition}[theorem]{Proposition}
\theoremstyle{definition}
\newtheorem{definition}[theorem]{Definition}
\theoremstyle{remark}
\newtheorem{remark}[theorem]{Remark}
\numberwithin{equation}{section} \errorcontextlines=0
\newcommand{\Pf}{\mathrm{Pf}}
\newcommand{\ot}{\otimes}
\newcommand{\sign}{\mathrm{sign}}
\begin{document}

\dedicatory{Dedicated to Vyjayanthi Chari in honor of her 60th birthday}

\title[Dynamical quantum determinants and Pfaffians]
{Dynamical quantum determinants and Pfaffians}
\author{Naihuan Jing}
\address{NJ: Department of Mathematics, North Carolina State University, Raleigh, NC 27695, USA}
\email{jing@math.ncsu.edu}
\author{Jian Zhang}
\address{JZ: Institute of Mathematics and Statistics, University of Sao Paulo, Sao Paulo, Brazil 05315-970}
\email{zhang@ime.usp.br}
\thanks{{\scriptsize
\hskip -0.6 true cm MSC (2010): Primary: 17B37; Secondary: 58A17, 15A75, 15B33, 15A15.
\newline Keywords: quantum determinant, quantum Pfaffian, dynamical quantum groups, Hopf algebras\\
%Supported by NSFC (grant no. 11531004), Simons Foundation (grant no. 523868).
%Fapesp (grant no. 2015/05927-0) and Humboldt Foundation.
}}

\begin{abstract}
We introduce the dynamical quantum Pfaffian on the dynamical quantum general linear group and prove its
fundamental transformation identity. Hyper quantum dynamical Pfaffian is also introduced and formulas connecting them
%and quantum determinant
are given.
\end{abstract}
\maketitle
\section{Introduction}

Dynamical quantum groups are important generalization of quantum groups
introduced by Etingof and Varchenko \cite{EV} in connection with the elliptic quantum groups \cite{F, FV}.
See the review \cite{ES} for the background and related literature as well as comparison with
usual quantum groups (see \cite{CP}).
The dynamical quantum group is in fact some quantum groupoid, thus also related to the deformation of the Poisson
groupoid \cite{L, T, X}.  In this paper we essentially follow \cite{EV} to define the dynamical quantum group with some modification \cite{KN}.

As discussed in \cite{FRT} in general, given an R-matrix one can associate certain quantum semigroup $A(R)$ via the $RTT$ formulation.
Let $V$ be the complex $n$-dimensional vector space with basis $v_i$ and dual basis $\lambda_i$ for $V^*$.
We consider the dynamical R-matrix $R(\underline{\lambda})$ defined on $V\otimes V$ as follows.
\begin{align}\notag
R(\underline{\lambda})&=q\sum_{i=1}^ne_{ii}\otimes e_{ii}+\sum_{i<j}^ne_{ii}\otimes e_{jj}+
\sum_{i>j}^ng(\lambda_i-\lambda_j)e_{ii}\otimes e_{jj}\\
&\qquad\qquad\qquad +\sum_{i\neq j}^nh(\lambda_i-\lambda_j)e_{ii}\otimes e_{jj}
\end{align}
where $\underline{\lambda} = (\lambda_1, \ldots, \lambda_n)$, $e_{ij}$ are the unit matrix elements inside $\mathrm{End}(V)$ such that
$e_{ij}v_k=\delta_{jk}v_i$, and $g, h$ are
certain $q$-analog functions on $V$ (see \eqref{h0g}).
The associated bialgebra $\mathscr{F}_R(M(n))$ is called the dynamical quantum group in the general linear type, which generalizes
the usual quantum general linear semigroup $M_q(n)$. The  non-dynamical quantum semigroup $M_q(n)$ becomes the quantum group $\mathrm{GL}_q(n)$ with the help of
a special central element called the quantum determinant $\det_q$.

On the quantum semigroup $M_q(n)$ one can develop a theory of
quantum linear algebra
\cite{HH}, and introduce quantum determinants and minors. One can prove
key equations such as the quantum Cramer identity, Cayley-Hamilton identity etc \cite{TT, Ma}.
For a unified treatment using Manin's quadratic algebras, see
\cite{JZ1,JZ3}.

Correspondingly on the dynamical quantum general linear semigroupoid $\mathscr{F}_R(M(n))$, we can also
introduce the dynamical quantum determinant, minors and prove that they also enjoy similar favorable properties
\cite{HIO, KN, KN2}. It turns out that the quantum dynamical determinant is also a central group-like element, and the
Laplace expansions for quantum dynamical minors also are satisfied in a manner similar to the non-dynamical quantum situation.
In particular the quantum dynamical determinant also turns $\mathscr{F}_R(M(n))$ into a dynamical quantum groupoid \cite{S, vN}.

The gaol of this paper is to introduce the quantum dynamical Pfaffian and show that
it enjoys favorable properties similar to the quantum group situation \cite{JZ}. %, JZ1, JZ2}.
Our main technique is to use quadratic algebras or quantum de Rham complexes \cite{Ma} to study quantum
determinants and quantum Pfaffians, and express them as the scaling constants of
quantum differential forms (cf. \cite{JZ1}).
In particular, we prove that the dynamical quantum Pfaffian satisfies the transformation property:
\begin{equation}
\Pf(ABA^t)=\det(A)\Pf(B)
\end{equation}
even though the identity $\Pf(A)=\sqrt{\det(A)}$ no longer holds for the quantum anti-symmetric matrix.

The paper is organized as follows. In section two, we introduce the dynamical quantum general linear group via the
generalized quantum Yang-Baxter R-matrix and review the basic information on quantum dynamical minors
and determinants. In section three, we give a factorization formula for the dynamical quantum determinant in terms of quasi-determinant of Gelfand and Retakh. In section four, we study the dynamical quantum Pfaffians using $q$-forms. In the last section, quantum dynamical hyper-Pfaffians are
given and their fundamental properties and identities are discussed.

\section{Dynamical analogue of the quantum algebra M(n).}
In this section, we recall some basic facts about dynamical quantum groups \cite{KN}.

Let $\mathfrak{h}^*$ be the dual space of the $n$-dimensional commutative Lie algebra $\mathfrak h$ and we fix
a linear basis $\{ e_i\}$ of $\mathfrak{h}^*$, so $\mathfrak{h}^*$ can be identified with $\mathbb C^n$.
For $[1,n]=\{1,2,\ldots,n\}$, define $\omega:[1,n]\rightarrow \mathfrak{h}^* $ by $\omega(i)=e_i$.

Fix a generic $q\in\mathbb C^{\times}$. For $\lambda\in \mathfrak h^*$, the functional $q^{\lambda}: \mathfrak h\longrightarrow \mathbb C$ is defined as usual by
$v\mapsto q^{\lambda(v)}, v\in \mathfrak h$. We denote by $h(\lambda)$ and $g(\lambda)$ the following
special meromorphic functionals on $\mathfrak h$: %Set
\begin{equation}\label{h0g}
\begin{split}
h(\lambda)&=q\frac{q^{-2\lambda}-q^{-2}}{q^{-2\lambda}-1},\\
g(\lambda)&=h(\lambda)h(-\lambda)=\frac{(q^{-2\lambda}-q^{-2})(q^{-2\lambda}-q^2)}{(q^{-2\lambda}-1)^2}.
\end{split}
\end{equation}

Let $M_{\mathfrak h^*}$ be the space of meromorphic functionals on $\mathfrak h^*$. In particular, the above $f(\lambda)$, $g(\lambda)$ are
elements inside $M_{\mathfrak h^*}$.
Let $\mathfrak{h}$-algebra $\mathscr{F}_R(M(n))$ be the associative algebra generated by the elements $t_{ij}$,
$1\leq i,j \leq n$ together with two copies of $M_{\mathfrak{h}^*}$. The elements of the two copies $M_{\mathfrak{h}^*}$ are
$f(\underline{\lambda})= f(\lambda_1,\ldots,\lambda_n)$ and $f(\underline{\mu})= f(\mu_1,\ldots,\mu_n)$,
embedded as subalgebras. Here $\lambda_i$ (resp. $\mu_i$) is a function on
$\mathfrak h$. The defining relations of $\mathscr{F}_R(M(n))$ %$M_{\mathfrak h^*}$
consist of two types. The first group of relations are given by
\begin{equation}
\begin{split}
f_1(\underline{\lambda}) f_2(\underline{\mu})=f_2(\underline{\mu})f_1(\underline{\lambda}),\\
f(\underline{\lambda})t_{ij}=t_{ij}f(\underline{\lambda}+\omega(i)),\\
f(\underline{\mu})t_{ij}=t_{ij}f(\underline{\mu}+\omega(j)),
\end{split}
\end{equation}
where $f,f_1,f_2\in M_{\mathfrak{h}^*}$. The second set of relations are %together with the relations
\begin{equation}\label{relation}
\begin{split}
h(\mu_k-\mu_l)t_{ik}t_{il}&=t_{il}t_{ik},\quad k<l\\
h(\lambda_j-\lambda_i)t_{jk}t_{ik}&=t_{ik}t_{jk},\quad i<j\\
t_{ik}t_{jl}=t_{jl}t_{ik}&+\left(h(\lambda_j-\lambda_i)-h(\mu_k-\mu_l)\right)t_{jk}t_{il},\quad i<j,k<l\\
g(\mu_k-\mu_l)t_{ik}t_{jl}&=
g(\lambda_i-\lambda_j)t_{jl}t_{ik}\\
&+\left(h(\mu_l-\mu_k)-h(\lambda_i-\lambda_j)\right)t_{il}t_{jk}, i<j,k<l.
\end{split}
\end{equation}

The algebra $\mathscr{F}_R(M(n))$ has a bigradation defined as follows.
Let $deg(t_{ij})=(\omega(i), \omega(j))=(e_i, e_j)\in \mathbb N^n\times \mathbb N^n$ and extend this multiplicatively. Then
$$
\mathscr{F}_R(M(n))=\bigoplus_{(m,p)\in \mathbb N^n\times\mathbb N^n\cup (0, 0)} \mathscr F_{m, p}
$$
where the summand
$f(\underline{\lambda}),f(\underline{\mu})\in \mathscr{F}_{00}=M_{\mathfrak h}^{\otimes 2}$, and $\mathscr F_{m, p}=\{t| deg(t)=(m, p)\in \mathbb N^n\times \mathbb N^n\}$,
and the moment maps are given by
$\mu_{l}(f)=f(\lambda), \mu_{r}(f)=f(\mu)$.

For $\alpha\in \mathfrak h^*$ we denote by $T_{\alpha}: M_{\mathfrak h^*}
\rightarrow M_{\mathfrak h^*}$ the automorphism
$(T_{\alpha}f)(\lambda)=f(\lambda+\alpha)$ for all $\lambda\in \mathfrak h^*$.
The algebra $\mathscr{F}_R(M(n))$ has a comultiplication
$\Delta: \mathscr{F}_R(M(n))\longrightarrow \mathscr{F}_R(M(n))\ot \mathscr{F}_R(M(n))$ given by
\begin{equation}
\begin{split}
\Delta(t_{ij})&=\sum_kt_{ik}\otimes t_{kj},\\
\Delta(f(\underline{\lambda}))&=f(\underline{\lambda})\otimes 1,\\
\Delta(f(\underline{\mu}))&=1\ot f(\underline{\mu}).
\end{split}
\end{equation}
and the counit $\varepsilon$ given by $\varepsilon(t_{ij})=\delta_{ij}T_{-\omega(i)}$,
$\varepsilon(f(\underline{\lambda}))=\varepsilon(f(\underline{\mu}))=f$
and the map is extended as a homomorphism and
we equip $\mathscr{F}_R(M(n))$ with the structure of an $\mathfrak{h}$-bialgebroid.

\begin{definition}
An $\mathfrak{h}$-space $V$ is a vector space over $M_{\mathfrak{h}^*}$ equipped with a diagonalizable $\mathfrak{h}$-module, i.e.
$V=\sum_{\alpha\in \mathfrak{h}^*}V_{\alpha}$, with $M_{\mathfrak{h}^*}V_{\alpha}\in V_{\alpha} $ for all $\alpha\in \mathfrak{h}^*$.
A morphism of $\mathfrak{h}$-spaces is an $\mathfrak{h}$-invariant $\mathfrak{h}^*$-linear map.
\end{definition}

We next define the tensor product of an $\mathfrak{h}$-bialgebroid $\mathcal{A}$ and an $\mathfrak{h}$-space $V$.
Put $V\widetilde{\ot }\mathcal{A}=\bigoplus _{\alpha,\beta\in \mathfrak{h}^*}(V_{\alpha}\ot_{\mathfrak{h}^*} \mathcal{A}_{\alpha\beta} )$
where $\ot_{\mathfrak{h}^*}$ denotes the usual tensor product modulo the relations
$v\ot \mu_l(f)a =fv\ot a$.
The grading $V_{\alpha}\ot_{\mathfrak{h}^*} \mathcal{A}_{\alpha\beta} \subseteq (V\ot \mathcal{A})_\beta$ for all $\alpha$ and $f(v\ot a)=v\ot\mu_r(f)a$ makes $V\widetilde{\ot}\mathcal{A}$ into an $\mathfrak{h}$-space.
Analogously $\mathcal{A} \widetilde{\ot} V =\bigoplus _{\alpha,\beta\in \mathfrak{h}^*}(\mathcal{A}_{\alpha\beta} \ot_{\mathfrak{h}^*}V_{\beta}  )$
where $\ot_{\mathfrak{h}^*}$ denotes the usual tensor product modulo the relations
$ \mu_r(f)a \ot v=a\ot fv$.
The grading  $\mathcal{A}_{\alpha\beta} \ot_{\mathfrak{h}^*}V_{\beta} \subseteq (  \mathcal{A}\ot V)_\alpha$ and $f(a\ot v)=\mu_l(f)a\ot v$ makes
$\mathcal{A}\widetilde{\ot} V$ into an $\mathfrak{h}$-space.

We now construct two special $\mathscr{F}_R(M(n))$-comodules. Let $W=M_{\mathfrak h}\langle w_i\rangle$ be the unital associative algebra generated by the elements $w_i, 1\leq i\leq n$
and $M_{\mathfrak{h}^*}$, its elements denoted by $f(\underline{\lambda})$,
subject to the relations
\begin{equation}\label{relation W}
\begin{split}
w_i^2&=0,  1\leq i\leq n\\
w_jw_i&=-h(\lambda_j-\lambda_i)w_iw_j , 1\leq i<j \leq n,
\end{split}
\end{equation}
as well as the relation $f(\underline{\lambda})w_i=w_if(\underline{\lambda}+\omega(i))$ for all $f\in M_{\mathfrak{h}^*}$.

Let $V=M_{\mathfrak h}\langle v_i\rangle$ be the unital associative algebra generated by the elements $v_i, 1\leq i\leq n$
and a copy of $M_{\mathfrak{h}^*}$ embedded as a subalgebra, its elements denoted by $f(\underline{\lambda})$
subject to the relations
\begin{equation}\label{relation V}
\begin{split}
v_i^2&=0,  1\leq i\leq n\\
v_iv_j&=-h(\lambda_i-\lambda_j)v_jv_i , 1\leq i<j \leq n,
\end{split}
\end{equation}
plus that $f(\underline{\lambda})v_i=v_if(\underline{\lambda}+\omega(i))$ for all $f\in M_{\mathfrak{h}^*}$.

The following result is easy to see.

\begin{theorem}\cite{KN}
Define $\alpha_R(1)=1\ot 1$, $\alpha_R(w_i)=\sum_{j=1}^n w_j\ot t_{ji}$, $\alpha_L(1)=1\ot 1$, $\alpha_L(v_i)=\sum_{j=1}^n t_{ij}\ot v_j$.
Then $\alpha_L$ extends uniquely to
$\alpha_L: V\rightarrow \mathscr{F}_R(M(n))\ot V $
such that $V$ is a left $\mathfrak{h}$-comodule algebra for $\mathscr{F}_R(M(n))$ and $\alpha_R$ extends uniquely to
$\alpha_R: W\rightarrow W\ot \mathscr{F}_R(M(n))$
such that $W$ is a right $\mathfrak{h}$-comodule algebra for $\mathscr{F}_R(M(n))$.
\end{theorem}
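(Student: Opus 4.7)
The plan is, for $\alpha_L$, to (a) check that the assignment $v_i\mapsto\sum_j t_{ij}\ot v_j$ respects the defining relations of $V$, so $\alpha_L$ extends uniquely to an algebra homomorphism $V\to\mathscr{F}_R(M(n))\widetilde{\ot}V$, and (b) verify the coaction axioms on generators. The argument for $\alpha_R$ on $W$ is symmetric, obtained by swapping the roles of the two tensor factors and of rows and columns of $(t_{ij})$.

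For step (a), the commutation $f(\underline{\lambda})v_i=v_if(\underline{\lambda}+\omega(i))$ maps under $\alpha_L$ to the relation $f(\underline{\lambda})t_{ij}=t_{ij}f(\underline{\lambda}+\omega(i))$ of $\mathscr{F}_R(M(n))$ tensored with $\id_V$, and thus holds automatically. To verify $\alpha_L(v_i)^2=0$, I would expand
\[
\alpha_L(v_i)^2=\sum_{j,k}t_{ij}t_{ik}\ot v_jv_k,
\]
kill the diagonal via $v_j^2=0$, pair the $j<k$ and $j>k$ terms, and substitute $v_kv_j=-h(\lambda_k-\lambda_j)^{-1}v_jv_k$ in the second of the pair. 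Transporting the resulting meromorphic factor across the tensor sign via the defining rule $a\ot fv=\mu_r(f)a\ot v$ (which turns $\lambda$'s into $\mu$'s, with the proper weight shift coming from the grading of $v_jv_k$) yields a multiple of $t_{ij}t_{ik}-h(\mu_j-\mu_k)^{-1}t_{ik}t_{ij}$, which vanishes by the first line of \eqref{relation}. The skew relation
\[
\alpha_L(v_i)\alpha_L(v_j)+h(\lambda_i-\lambda_j)\alpha_L(v_j)\alpha_L(v_i)=0\qquad(i<j)
\]
is checked analogously: once the basis monomials $v_kv_l$ ($k\le l$) are isolated, the coefficients reduce precisely to the combinations of $t_{ik}t_{jl}$, $t_{il}t_{jk}$, $t_{jk}t_{il}$, $t_{jl}t_{ik}$ whose vanishing is the content of the second, third, and fourth lines of \eqref{relation}. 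In this sense the dynamical $RTT$ relations of $\mathscr{F}_R(M(n))$ are exactly what is required for the exterior relations of $V$ to be coinvariant under $\alpha_L$.

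Step (b) is then routine: coassociativity reduces to $\Delta(t_{ij})=\sum_k t_{ik}\ot t_{kj}$, since
\[
(\Delta\ot\id)\alpha_L(v_i)=\sum_{j,k}t_{ik}\ot t_{kj}\ot v_j=(\id\ot\alpha_L)\alpha_L(v_i),
\]
and the counit axiom $(\varepsilon\ot\id)\alpha_L(v_i)=T_{-\omega(i)}\,v_i=v_i$ is immediate from $\varepsilon(t_{ij})=\delta_{ij}T_{-\omega(i)}$. The main obstacle throughout is the weight-shift bookkeeping when meromorphic factors $h(\lambda_\cdot-\lambda_\cdot)$ sitting in the $V$-factor are pushed across the $\widetilde{\ot}$ to become $h(\mu_\cdot-\mu_\cdot)$ inside $\mathscr{F}_R(M(n))$; once this conversion is tracked honestly, the match with \eqref{relation} is exact, and the analogous computation for $\alpha_R$ — using the column-index forms of the same $RTT$ relations — goes through verbatim.
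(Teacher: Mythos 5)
Your verification is essentially the paper's intended argument: the paper itself gives no proof (the theorem is quoted from \cite{KN} as "easy to see"), and the content of that result is precisely the direct check you outline — the dynamical $RTT$ relations \eqref{relation} are exactly what make the defining relations of $V$ (resp.\ $W$) stable under $\alpha_L$ (resp.\ $\alpha_R$) once meromorphic coefficients are transported across $\widetilde{\ot}$ via $a\ot fv=\mu_r(f)a\ot v$, after which coassociativity and the counit axiom are immediate on generators. One small slip in your intermediate step: from $v_jv_k=-h(\lambda_j-\lambda_k)v_kv_j$ for $j<k$ one gets $v_kv_j=-h(\lambda_j-\lambda_k)^{-1}v_jv_k$, not $-h(\lambda_k-\lambda_j)^{-1}v_jv_k$ (these differ because $h(\lambda)h(-\lambda)=g(\lambda)\neq 1$); your final combination $t_{ij}t_{ik}-h(\mu_j-\mu_k)^{-1}t_{ik}t_{ij}$ is nonetheless the correct one and does vanish by the first line of \eqref{relation}, so the slip does not affect the argument.
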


Let $I$ be a subset of $[1,n]$ with entries $i_1<i_2<\cdots<i_r$ and
$S_r$ be the symmetric group in $r$ letters. For an element $\sigma\in S_r$, we use $l(\sigma)$ denote the length of $\sigma$. The generalized sign functions $S(\sigma,I)$ and $\tilde{S}(\sigma,I)$ are defined as follows:
\begin{align}\notag
S(\sigma,I)(\underline{\lambda})=\prod_{1\leq k<l\leq r:\sigma(k)>\sigma(l)}(-h(\lambda_{i_{\sigma(k)}}-\lambda_{i_{\sigma(l)}}))\\
=(-q)^{l(\sigma)} \prod_{1\leq k<l\leq r:\sigma(k)>\sigma(l)}
\frac{q^{-2\lambda_{i_{\sigma(k)}}}-q^{-2}q^{-2\lambda_{i_{\sigma(l)}}}}{q^{-2\lambda_{i_{\sigma(k)}}}-q^{-2\lambda_{i_{\sigma(l)}}}}
\end{align}

\begin{align}
\tilde{S}(\sigma,I)(\underline{\lambda})=\prod_{k<l:\sigma(k)>\sigma(l)}(-h(\lambda_{i_{\sigma(l)}}-\lambda_{i_{\sigma(k)}}))
=\frac{1}{S(\sigma,I)(\underline{\lambda}+\underline{1})}.
\end{align}
where $\underline{1}=(1, \ldots, 1)$.

For two subsets $I, J\subset [1, n]$ with $|I|=|J|=r$, the dynamical quantum column minor determinants $\xi^I_J$ and row minor determinants $\eta^I_J$ and  are defined as follows:
\begin{align}
\xi^I_J&=\mu_r(S(\rho,J)^{-1})\sum_{\sigma\in S_r}\mu_{l}(S(\sigma,I))t_{i_{\sigma(1)}j_{\rho(1)}}
\cdots t_{i_{\sigma(r)}j_{\rho(r)}}\\
\eta^I_J&=\mu_l(\tilde{S}(\rho,I)^{-1})\sum_{\sigma\in S_r}\mu_{r}(\tilde{S}(\sigma,J))t_{i_{\rho(r)}j_{\sigma(r)}}
\cdots t_{i_{\rho(1)}j_{\sigma(1)}}
\end{align}
where $\rho\in S_r$.
Using the comodule structures of $\alpha_L$ and $\alpha_R$ the following result can be easily obtained.

\begin{theorem}\cite{KN}
$\xi^I_J=\eta^I_J$ in $\mathscr{F}_R(M(n))$.
\end{theorem}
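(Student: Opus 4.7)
The plan is to realize both $\xi^I_J$ and $\eta^I_J$ as coefficients of canonical top-degree monomials obtained by pushing the standard elements $v_{i_1}\cdots v_{i_r}\in V$ and $w_{j_1}\cdots w_{j_r}\in W$ through the comodule maps $\alpha_L$ and $\alpha_R$, respectively, and then showing that the two coefficients agree.

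First, I would apply $\alpha_L$ multiplicatively to $v_{i_1}\cdots v_{i_r}$ (where $I=\{i_1<\cdots<i_r\}$), obtaining $\sum_{j_1,\ldots,j_r}t_{i_1j_1}\cdots t_{i_rj_r}\ot v_{j_1}\cdots v_{j_r}$. Since $v_k^2=0$ only tuples with distinct entries contribute; grouping these by the underlying set $J=\{j_1<\cdots<j_r\}$ and writing each surviving tuple as $(j_{\rho(1)},\ldots,j_{\rho(r)})$ for some $\rho\in S_r$, the relations \eqref{relation V} together with the moment-map rule $f(\underline{\lambda})v_i=v_if(\underline{\lambda}+\omega(i))$ let us reorder $v_{j_{\rho(1)}}\cdots v_{j_{\rho(r)}}$ into canonical form while producing on the $\mathscr{F}_R(M(n))$-side a scalar of type $\mu_r(S(\rho,J)^{-1})$. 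Reading off the coefficient of $v_{j_1}\cdots v_{j_r}$ identifies it with the defining expression for $\xi^I_J$, and $\rho$-independence of $\xi^I_J$ drops out because the original sum is intrinsic. A parallel calculation with $\alpha_R$ on $w_{j_1}\cdots w_{j_r}$, using \eqref{relation W}, identifies $\eta^I_J$ with the coefficient of $w_{i_1}\cdots w_{i_r}$.

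To conclude $\xi^I_J=\eta^I_J$, I would specialize both formulas to $\rho=e$, yielding
\[
\xi^I_J=\sum_{\sigma}\mu_l(S(\sigma,I))\,t_{i_{\sigma(1)}j_1}\cdots t_{i_{\sigma(r)}j_r},\quad \eta^I_J=\sum_{\sigma}\mu_r(\tilde{S}(\sigma,J))\,t_{i_rj_{\sigma(r)}}\cdots t_{i_1j_{\sigma(1)}},
\]
and proceed by induction on $r$. The base case $r=2$ is precisely the third identity in \eqref{relation}, which expresses $t_{i_1j_1}t_{i_2j_2}-t_{i_2j_2}t_{i_1j_1}$ as a multiple of $t_{i_2j_1}t_{i_1j_2}$; combined with the moment-map rule this converts one expression into the other. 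The inductive step is a Laplace-style expansion, peeling off the last factor of the reversed product and applying \eqref{relation} repeatedly.

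The main obstacle is the careful bookkeeping of the moment-map shifts $T_{\omega(i)},T_{\omega(j)}$: these account for the discrepancy between $S(\sigma,I)(\underline{\lambda})$ appearing in $\xi^I_J$ and $\tilde{S}(\sigma,I)(\underline{\lambda})=S(\sigma,I)(\underline{\lambda}+\underline{1})^{-1}$ appearing in $\eta^I_J$. In the non-dynamical setting of \cite{JZ1} the shifts are trivial and the argument collapses to a standard row/column-expansion symmetry, but in the dynamical case every commutation move must be tracked precisely, and this is the only real technical point.
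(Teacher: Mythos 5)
Your comodule identifications in the first half are consistent with the paper (they are exactly the displayed formulas $\alpha_L(v_I)=\sum_K\xi^I_K\ot v_K$ and $\alpha_R(w_J)=\sum_K w_K\ot \xi^K_J$), but they are never actually used to prove the equality: knowing that the $\xi$'s are the coefficients of $\alpha_L$ on $V$ and that the coefficients of $\alpha_R$ on $W$ come out in the $\eta$-form does not, by itself, relate the two families, since they arise from two different coactions on two different algebras. The route the paper indicates (following \cite{KN}, ``using the comodule structures of $\alpha_L$ and $\alpha_R$'') closes exactly this loop: both families appear as expansions of one and the same element, e.g.\ one expands $\Omega^r$ for $\Omega=\sum_{i,j}w_i\ot t_{ij}\ot v_j$ in $W\widetilde{\ot}\,\mathscr{F}_R(M(n))\,\widetilde{\ot}\,V$ once by grouping through $\alpha_R(w_j)=\sum_i w_i\ot t_{ij}$ (reordering the $w$'s, which produces the row form) and once by grouping through $\alpha_L(v_i)=\sum_j t_{ij}\ot v_j$ (reordering the $v$'s, which produces the column form), and comparison of the coefficients of $w_K\ot\cdots\ot v_L$ yields $\xi^I_J=\eta^I_J$ with no term-by-term commutation at all; this is the same device the paper uses later for Theorems \ref{pf eq1} and \ref{pf eq2}. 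Your proposal discards this mechanism and instead rests the equality on a direct induction.

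That induction is where the genuine gap lies. The base case $r=2$ with $\rho=e$ is indeed exactly the third relation of \eqref{relation}, but for $r\ge 3$ ``peeling off the last factor of the reversed product and applying \eqref{relation} repeatedly'' is not yet an argument: passing from the column-ordered monomials $t_{i_{\sigma(1)}j_1}\cdots t_{i_{\sigma(r)}j_r}$ to the reversed row-ordered monomials $t_{i_rj_{\sigma(r)}}\cdots t_{i_1j_{\sigma(1)}}$ requires a global reordering in which all four relations of \eqref{relation} intervene (in particular the $g$-relation when both row and column indices are exchanged), and every move shifts the dynamical arguments of the coefficients, so the discrepancy between $S(\sigma,I)(\underline{\lambda})$ and $\tilde S(\sigma,J)=S(\sigma,J)(\underline{\lambda}+\underline{1})^{-1}$ that you set aside as ``the only real technical point'' is precisely the content of the theorem and cannot be deferred. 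You also do not specify what the induction hypothesis is applied to: a peeling argument would need Laplace-type expansions of both $\xi^I_J$ and $\eta^I_J$ along a row or column (Proposition \ref{det lap}), which are themselves obtained from the comodule formalism, together with a matching of the two expansions term by term; none of this is supplied. As written, the proposal verifies only the $2\times 2$ case, while the general case remains unproved unless you either carry out that bookkeeping in full or revert to the double-expansion comodule argument the paper has in mind.
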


The element  $\det=\xi^{\{1,2\ldots,n\}}_{\{1,2,\ldots,n\}}$ will be called the (quantum dynamic) determinant of $\mathscr{F}_R(M(n))$.

For ordered subsets $I, J$ one has that
\begin{align}
\alpha_L(v_I)&=\sum_{|K|=|I|}\xi^I_K\ot v_{K},\\
\alpha_R(w_J)&=\sum_{|K|=|J|}w_{K}\ot \xi^K_J,\\
\Delta(\xi^I_J)&=\sum_{|K|=|I|}\xi^I_K \ot \xi^K_J.
\end{align}

For disjoint ordered subsets $I_1,I_2$ of $\{1,2,\ldots ,n\}$, we introduce the quantum sign element $\sign(I_1,I_2)$ inside
$M_{\mathfrak{h}^*}$ by

\begin{equation}
\sign(I_1,I_2)=\prod_{k>l;k\in I_1,l\in I_2}(-h(\lambda_k-\lambda_l)).
\end{equation}

\begin{proposition}\cite{KN}\label{det lap}
Let $I, J_1,J_2$ be subsets of $\{1,2,\ldots,n\}$. If $J=J_1\cup J_2$,
$|I|=|J|$. Then
\begin{align}
\mu_r(\sign(J_1;J_2))\xi^I_J=\sum_{I_1\cup I_2=I}\mu_l(\sign(I_1;I_2))\xi^{I_1}_{J_1}\xi^{I_2}_{J_2},\\
\xi^J_I=\sum_{I_1\cup I_2=I}\xi^{J_1}_{I_1}\frac{\mu_l(\sign(J_2;J_1))}{\mu_r(\sign(I_2;I_1))}\xi^{I_2}_{J_2}.
\end{align}
\end{proposition}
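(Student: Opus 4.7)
The plan is to prove the first identity by exploiting the right-comodule-algebra structure of $W$; the second identity follows by the parallel argument using $\alpha_L$ on $V$ (or from the first combined with the equality $\xi^I_J=\eta^I_J$ from the preceding theorem). The strategy is to compute $\alpha_R(w_{J_1}w_{J_2})$ in two different ways inside $W\widetilde{\ot}\mathscr{F}_R(M(n))$ and equate coefficients.

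The first step is to establish a straightening identity in $W$. For disjoint ordered subsets $K_1, K_2 \subseteq [1,n]$ with ordered union $K$, iterating the defining relation $w_j w_i = -h(\lambda_j-\lambda_i)\, w_i w_j$ ($i<j$) gives
\begin{equation*}
w_{K_1}\, w_{K_2} = \sign(K_1,K_2)(\underline{\lambda})\, w_K,
\end{equation*}
with $w_{K_1}w_{K_2}=0$ when $K_1\cap K_2\neq\emptyset$ because $w_i^2=0$. The generated $h$-factors depend only on $\lambda$-entries indexed by $K_1\cup K_2$, so once moved to the extreme left they commute without shift past any $w_m$ with $m\notin K_1\cup K_2$, and no residual moment-map shift accumulates.

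Applying $\alpha_R$ to $w_{J_1}w_{J_2}=\sign(J_1,J_2)(\underline\lambda)\,w_J$ in two ways: on one side, using $\alpha_R(w_J)=\sum_K w_K\ot\xi^K_J$ together with the fact that $\alpha_R$ must send the $\underline\lambda$-base of $W$ to the $\underline\mu$-base of $\mathscr F_R(M(n))$, i.e.\ $\alpha_R(f(\underline\lambda))=1\ot\mu_r(f)$ (as forced by the compatibility of $f(\underline\lambda)w_i=w_if(\underline\lambda+\omega(i))$ with $\alpha_R(w_i)=\sum_j w_j\ot t_{ji}$ and $f(\underline\mu)t_{ji}=t_{ji}f(\underline\mu+\omega(i))$), one obtains
\begin{equation*}
\alpha_R(w_{J_1}w_{J_2})=\sum_{|K|=|J|}w_K\ot\mu_r(\sign(J_1,J_2))\,\xi^K_J.
\end{equation*}
On the other side, since $\alpha_R$ is an algebra homomorphism,
\begin{equation*}
\alpha_R(w_{J_1})\,\alpha_R(w_{J_2})=\sum_{L_1,L_2}w_{L_1}w_{L_2}\ot\xi^{L_1}_{J_1}\xi^{L_2}_{J_2};
\end{equation*}
re-applying the straightening to each $w_{L_1}w_{L_2}$ and transporting the resulting scalar $\sign(L_1,L_2)(\underline\lambda)$ across the tensor via $w\ot\mu_l(f)a=fw\ot a$ yields
\begin{equation*}
\sum_K w_K\ot\Bigl(\sum_{L_1\sqcup L_2=K}\mu_l(\sign(L_1,L_2))\,\xi^{L_1}_{J_1}\xi^{L_2}_{J_2}\Bigr).
\end{equation*}
Matching coefficients of $w_I$ on both sides produces the first identity. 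The second identity is obtained by the mirror argument with $\alpha_L$, $V$, and the quadratic relation $v_i v_j=-h(\lambda_i-\lambda_j)v_j v_i$; the opposite handedness of the $V$-relation (smaller indices drawn leftward rather than larger, as in $W$) is responsible for producing the reciprocal $g/h$ combination that reorganizes into the asymmetric quotient $\mu_l(\sign(J_2,J_1))/\mu_r(\sign(I_2,I_1))$.

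The main obstacle is the bookkeeping of the two moment maps. The scalar $\sign(J_1,J_2)$ originates \emph{outside} the coaction and thus transports as $\mu_r$ via the $\underline\lambda\to\underline\mu$ identification, while the scalars $\sign(L_1,L_2)$ arise on the $W$-side of the tensor \emph{after} coaction and transport as $\mu_l$; pinning down which occurrences become $\mu_l$ versus $\mu_r$, and verifying that all $\underline\lambda$-shifts induced by the $h$-factors cancel correctly, is where essentially all of the genuine effort lies. Once this convention is in place, the proposition reduces to the combinatorial matching of coefficients of a single basis vector $w_I$ of $W$.
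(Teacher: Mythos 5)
The paper gives no proof of this proposition at all---it is imported from [KN]---so there is no internal argument to compare against; your corepresentation strategy (apply $\alpha_R$ to $w_{J_1}w_{J_2}$, straighten in $W$, and match coefficients of the monomials $w_K$) is exactly the technique the paper itself deploys in Section 4 for the Pfaffian analogues, and your derivation of the first identity is essentially correct. Two small points there: the reason no moment-map shift accumulates when the factors $-h(\lambda_k-\lambda_l)$ are pushed to the far left is that every letter $w_m$ they must pass satisfies $m\in K_1\cup K_2$ but $m\neq k,l$ (all letters in the word are distinct); your sentence about commuting past $w_m$ with $m\notin K_1\cup K_2$ addresses the wrong set, since in the word $w_{K_1}w_{K_2}$ there are no such letters. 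Also, extracting the coefficient of $w_I$ tacitly uses that the degree-$r$ component of $W$ is free over $M_{\mathfrak h^*}$ with basis $\{w_K\}$; this deserves at least a sentence, though it is at the same level of implicitness as the paper's own Pfaffian arguments.

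The genuine gap is the second identity. You only assert that it follows ``by the mirror argument'' with $V$ and $\alpha_L$, but if one actually runs that computation (straightening with $v_iv_j=-h(\lambda_i-\lambda_j)v_jv_i$, which inverts the $h$-factors) the natural output has the shape $\xi^J_I=\sum_{I_1\cup I_2=I}\mu_l(\widetilde{\sign}(J_2;J_1))\,\xi^{J_1}_{I_1}\xi^{J_2}_{I_2}\,\mu_r(\widetilde{\sign}(I_2;I_1))^{-1}$, i.e.\ with $\widetilde{\sign}$ rather than $\sign$ and with the two moment-map factors at the outer ends, not the quotient $\mu_l(\sign(J_2;J_1))/\mu_r(\sign(I_2;I_1))$ sandwiched between the minors as in the statement (whose last factor $\xi^{I_2}_{J_2}$ is presumably a misprint for $\xi^{J_2}_{I_2}$). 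Passing from the former to the latter requires commuting the moment maps through the minors, which introduces the shifts $T_{\omega(J_1)}$ and $T_{-\omega(I_2)}$, and then identities for the shifted $h$-functions via $g(\lambda)=h(\lambda)h(-\lambda)$; your single sentence about ``the reciprocal $g/h$ combination that reorganizes into the asymmetric quotient'' names precisely this step---which carries all the dynamical content of the second formula---without performing it. The alternative route you mention (first identity plus $\xi^I_J=\eta^I_J$) is likewise only named, not executed. So as written the proposal proves the first identity but not the second.
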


It is easy to see from Proposition \ref{det lap} that
for any $i,j\in [1,n]$
\begin{align}
\begin{split}
\delta_{ij}\det=\sum_{k=1}^{n}\frac{\sign(\{k\};\hat{k})(\underline{\lambda})}{\sign(\{i\};\hat{i})(\underline{\mu})}
t_{kj}\xi^{\hat{k}}_{\hat{i}}, \\
\delta_{ij}\det=\sum_{k=1}^{n}t_{jk}\frac{\sign(\hat{i};\{i\})(\underline{\lambda})}{\sign(\hat{k};\{k\})(\underline{\mu})}
\xi^{\hat{i}}_{\hat{k}},\\
\delta_{ij}\det=\sum_{k=1}^{n}\frac{\sign(\hat{k};\{k\})(\underline{\lambda})}{\sign(\hat{i};\{i\})(\underline{\mu})}
\xi^{\hat{k}}_{\hat{i}}t_{kj}, \\
\delta_{ij}\det=\sum_{k=1}^{n}\xi^{\hat{i}}_{\hat{k}}\frac{\sign(\{i\};\hat{i})(\underline{\lambda})}{\sign(\{k\};\hat{k})(\underline{\mu})}
t_{jk}.
\end{split}
\end{align}

\begin{lemma}\cite{KN}
In $\mathscr{F}_R(M(n))$, the determinant commutes with all quantum minor determinants. In particular,
$\det$ commutes with all generators $t_{ij}$. Moreover, $\Delta(\det)=\det\ot \det$ and $\varepsilon(\det)=T_{-\underline{1}}$, with $\underline{1}=(1,\ldots,1)\in \mathfrak{h}^*$.
\end{lemma}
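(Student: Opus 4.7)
The plan is to address the three claims in increasing order of difficulty.

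For the coproduct, I apply the general formula $\Delta(\xi^I_J)=\sum_{|K|=|I|}\xi^I_K\ot\xi^K_J$ with $I=J=[1,n]$; since the only subset of $[1,n]$ of cardinality $n$ is $[1,n]$ itself, the sum collapses to $\Delta(\det)=\det\ot\det$. For the counit, I take $\rho=\id$ in the defining expansion so that
\begin{equation*}
\det=\sum_{\si\in S_n}\mu_l(S(\si,[1,n]))\,t_{\si(1),1}\cdots t_{\si(n),n},
\end{equation*}
then apply $\varepsilon(t_{ij})=\delta_{ij}T_{-\omega(i)}$. Only $\si=\id$ survives, and at the identity $S(\id,[1,n])=1$ (empty inversion set), so $\varepsilon(\det)=T_{-\omega(1)}\cdots T_{-\omega(n)}=T_{-\underline 1}$.

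For commutativity of $\det$ with the generators $t_{ij}=\xi^{\{i\}}_{\{j\}}$, the essential input is the quartet of Cramer-type identities displayed just above the lemma, which express
\begin{equation*}
\delta_{ij}\det=\sum_k a_k(\underline{\lambda},\underline{\mu})\,t_{kj}\,\xi^{\hat k}_{\hat i}=\sum_k b_k(\underline{\lambda},\underline{\mu})\,\xi^{\hat k}_{\hat i}\,t_{kj}
\end{equation*}
(together with two mirror forms) with explicit sign coefficients $a_k,b_k\in M_{\mathfrak h^*}$. A conceptually clean derivation uses the comodule algebra $V$: since $v_m\cdot v_{\hat k}=\delta_{mk}\,c_k(\underline{\lambda})\,v_{[1,n]}$ and $\alpha_L(v_{[1,n]})=\det\ot v_{[1,n]}$, comparing $\alpha_L(v_m v_{\hat k})$ with $\alpha_L(v_m)\,\alpha_L(v_{\hat k})$ and projecting onto the one-dimensional top piece yields the first identity, while the reversed product $v_{\hat k}v_m$ yields the second. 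To turn these into the commutation $\det\,t_{pq}=t_{pq}\,\det$, I multiply each Cramer form by $t_{pq}$ on the appropriate side and use the dynamical relations \eqref{relation} to reorder, at which point the two resulting sums coincide term by term.

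Having settled the base case, I extend to an arbitrary minor $\xi^I_J$ by induction on $r=|I|$ via the Laplace expansion of Proposition~\ref{det lap}, which writes $\xi^I_J$ as a sum of products of strictly smaller minors modulated by sign elements of $M_{\mathfrak h^*}$. The inductive hypothesis handles the minors, and since $\det$ has the symmetric bigrading $(\underline 1,\underline 1)$, the shift $T_{\underline 1}$ produced when $\det$ crosses any $f(\underline{\lambda})$ is exactly matched by the same shift on $\underline{\mu}$, so the ratios $\mu_l(\sign(I_1;I_2))/\mu_r(\sign(I_1;I_2))$ decorating the Laplace formula are preserved. The main obstacle throughout is precisely this bookkeeping of dynamical moment-map twists: unlike the non-dynamical setting where $R$-matrix symmetry furnishes centrality almost for free, here every scalar reordered across a generator costs a shift, and the argument hinges on those shifts cancelling thanks to $\det$ carrying the full symmetric weight $(\underline 1,\underline 1)$.
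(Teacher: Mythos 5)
Your treatment of the coproduct and counit is fine: taking $I=J=[1,n]$ in $\Delta(\xi^I_J)=\sum_{|K|=|I|}\xi^I_K\ot\xi^K_J$ does collapse the sum, and with $\rho=\id$ only $\sigma=\id$ survives under $\varepsilon(t_{ij})=\delta_{ij}T_{-\omega(i)}$, giving $T_{-\underline 1}$. Note, however, that the paper itself offers no proof of this lemma (it is quoted from \cite{KN}), so the comparison is with the argument there, which establishes the commutation by genuine computations with minors rather than by the shortcut you propose.

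The gap is in the heart of the statement, the base case $\det\, t_{pq}=t_{pq}\det$. After multiplying a Cramer identity by $t_{pq}$ you must move $t_{pq}$ past the cofactors $\xi^{\hat k}_{\hat i}$, and this is \emph{not} given by the quadratic relations \eqref{relation}: those only reorder two generators, whereas here one needs the full commutation relations between a generator and an $(n-1)\times(n-1)$ dynamical minor, which produce cross terms with shifted coefficients and are essentially as deep as the centrality statement itself (in \cite{KN} they are derived by a separate induction using the Laplace expansions). Asserting that "the two resulting sums coincide term by term" is therefore not a proof; as written the argument assumes what it must establish. A second, smaller point: in your induction step the correct reason the coefficients survive conjugation by $\det$ is not that the $\mu_l$- and $\mu_r$-shifts "match" (the numerator and denominator involve \emph{different} sign functions, $\sign(I_1;I_2)$ versus $\sign(J_1;J_2)$), but that each sign function is a product of $h(\lambda_k-\lambda_l)$, hence depends only on differences and is invariant under the global shift $T_{\underline 1}$ carried by the bidegree $(\underline 1,\underline 1)$ of $\det$. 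With that correction the inductive extension from generators to arbitrary minors via Proposition \ref{det lap} is sound, but it still rests entirely on the unproved base case.
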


\begin{proposition}\cite{KN}
The $\mathfrak{h}$-bialgebroid $\mathscr{F}_R(M(n))$ is an $\mathfrak{h}$-Hopf algebroid with the antipode
S defined on the generators by $S(\det^{-1})=\det$, $S(\mu_r(f))=\mu_l(f),S(\mu_l(f))=\mu_r(f)$ for all
$f\in M_{\mathfrak{h}^*}$ and
$$S(t_{ij})={\det}^{-1}\frac{\mu_l(\sign(\hat{j};\{j\}))}{\mu_r(\sign(\hat{i};\{i\}))}
\xi^{\hat{j}}_{\hat{i}}$$
and extended as an algebra antihomomorphism.
\end{proposition}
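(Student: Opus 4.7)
The plan is to verify the three pieces of the $\mathfrak{h}$-Hopf algebroid antipode axioms with the proposed $S$: that it is well defined as an algebra antihomomorphism on all of $\mathscr{F}_R(M(n))$, that it exchanges the moment maps (which is immediate from the formulas $S(\mu_l(f))=\mu_r(f)$, $S(\mu_r(f))=\mu_l(f)$), and that it satisfies the antipode identities
$$m\circ(\mathrm{id}\otimes S)\circ\Delta=\mu_l\circ\varepsilon,\qquad m\circ(S\otimes\mathrm{id})\circ\Delta=\mu_r\circ\varepsilon$$
in the bialgebroid sense.

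First I would verify the antipode identities on the generators. On $\mu_l(f)$ and $\mu_r(f)$ they follow at once from the swap of moment maps, $\Delta(\mu_l(f))=\mu_l(f)\otimes 1$, $\Delta(\mu_r(f))=1\otimes \mu_r(f)$, and the counit formulas. On $t_{ij}$ the two identities become
$$\sum_k t_{ik}\,S(t_{kj})=\delta_{ij}\,\mu_l(T_{-\omega(i)}),\qquad \sum_k S(t_{ik})\,t_{kj}=\delta_{ij}\,\mu_r(T_{-\omega(j)}).$$
Substituting the stated formula for $S(t_{kj})$ and pulling the central element $\det^{-1}$ to the front, these reduce to the four cofactor identities listed immediately after Proposition~\ref{det lap}. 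The one bookkeeping subtlety is to push the $\mu_l(\sign(\hat{j};\{j\}))$ and $\mu_r(\sign(\hat{k};\{k\}))$ factors from the formula for $S(t_{kj})$ past the preceding $t_{ik}$ via $f(\underline{\lambda})t_{ij}=t_{ij}f(\underline{\lambda}+\omega(i))$ and $f(\underline{\mu})t_{ij}=t_{ij}f(\underline{\mu}+\omega(j))$; these shifts combine with the identity $\tilde{S}(\sigma,I)(\underline{\lambda})=S(\sigma,I)^{-1}(\underline{\lambda}+\underline{1})$ to match the cofactor expansions exactly.

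The main obstacle, and the bulk of the work, is showing that $S$ extended as an algebra antihomomorphism respects the quadratic relations \eqref{relation}. I would apply $S$ to each of the four relation types, reverse the order of multiplication, and then identify the resulting identity as an RTT-type relation among the complementary minors $\xi^{\hat j}_{\hat i}$, weighted by appropriate $\mu_l(\sign)$ and $\mu_r(\sign)$ factors. Such $(n-1)\times(n-1)$ dynamical quantum minor relations are obtained from Proposition~\ref{det lap} by iterated $2\times 2$ Laplace expansion, or equivalently from the coaction formulas $\alpha_L(v_I)=\sum_K \xi^I_K\otimes v_K$ and $\alpha_R(w_J)=\sum_K w_K\otimes \xi^K_J$ together with the quadratic relations \eqref{relation V}, \eqref{relation W} for $V$ and $W$. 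The computational challenge is to keep track of the shifts of $h(\lambda_j-\lambda_i)$ and $h(\mu_k-\mu_l)$ as they are moved through the sign factors and through $\det^{-1}$.

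Finally, coalgebra compatibility of $S$, namely $\Delta\circ S=(S\otimes S)\circ\Delta^{\mathrm{op}}$ and $\varepsilon\circ S=\varepsilon$, reduces via the explicit formula and the group-like property $\Delta(\det)=\det\otimes\det$ to the standard rules $\Delta(\xi^I_J)=\sum_{|K|=|I|}\xi^I_K\otimes\xi^K_J$ and its counit analogue, both of which specialize to the complementary minors $\xi^{\hat j}_{\hat i}$. Together with the antipode identities above, this completes the verification that $\mathscr{F}_R(M(n))$ is an $\mathfrak{h}$-Hopf algebroid.
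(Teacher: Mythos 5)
The paper itself offers no argument for this proposition: it is quoted from \cite{KN}, so there is no internal proof to match your plan against; your proposal must therefore stand on its own, and as it stands it has a genuine gap. The part you do reduce correctly is the antipode identity on the generators: after commuting $\det^{-1}$ and the $\sign$ factors past $t_{ik}$ (with the $\underline{1}$-shift coming from the bidegree of $\det$), the sums $\sum_k t_{ik}S(t_{kj})$ and $\sum_k S(t_{ik})t_{kj}$ do collapse to the four cofactor expansions displayed after Proposition \ref{det lap}. But the step you yourself call ``the bulk of the work''---that the antihomomorphic extension of $S$ is well defined, i.e.\ compatible with the quadratic relations \eqref{relation}, with the $f$--$t$ commutation relations, and with $\det\cdot\det^{-1}=1$---is only announced, not carried out. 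The assertion that the needed relations among the complementary minors $\xi^{\hat j}_{\hat i}$ ``are obtained from Proposition \ref{det lap} by iterated $2\times 2$ Laplace expansion'' is exactly the nontrivial point in the dynamical setting: because of the $\mu_l$/$\mu_r$ shifts these minor relations carry $h$-factors with shifted arguments, and no verification is given that they reproduce the images of \eqref{relation} under $S$. Without that, the proposition is not proved.

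Two further points need repair. First, the antipode axioms are misstated: $\varepsilon(t_{ij})=\delta_{ij}T_{-\omega(i)}$ is a difference operator, so an expression like $\mu_l(T_{-\omega(i)})$ is not meaningful; in the Etingof--Varchenko/Koelink--Norden framework the axioms read $m\circ(\id\ot S)\circ\Delta(a)=\mu_l(\varepsilon(a)1)$ and $m\circ(S\ot\id)\circ\Delta(a)=\mu_r\bigl(T_{\alpha}(\varepsilon(a)1)\bigr)$ for $a\in\mathcal{A}_{\alpha\beta}$, so for $a=t_{ij}$ the right-hand sides are simply $\delta_{ij}$ (note the asymmetric shift $T_\alpha$, which your symmetric formulation hides). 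Second, $S$ involves $\det^{-1}$, which does not exist in $\mathscr{F}_R(M(n))$; one must first localize at $\det$ (legitimate since $\det$ commutes with all $t_{ij}$ by the cited Lemma, though it only commutes with $\mu_l(f),\mu_r(f)$ up to the shift $f\mapsto f(\cdot+\underline 1)$), and check that $S$ respects the localization and the bigrading ($S(\mathcal{A}_{\alpha\beta})\subseteq\mathcal{A}_{-\beta,-\alpha}$). Your final paragraph on $\Delta\circ S=(S\ot S)\circ\Delta^{\mathrm{op}}$ is harmless but not among the axioms, so it does not compensate for the missing well-definedness argument.
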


\section{Quasideterminants and Dieudonn\'e determinants}

Throughout this section we work with rings of fractions of noncommutative rings.

\begin{definition}\cite{GR}
Let $\mathrm X=(x_{ij})$  be an $n\times n$ matrix over a ring with identity such that its inverse matrix $\mathrm X^{-1}$ exists, and the $(j,i)$th entry of $\mathrm X^{-1}$ is an invertible element
of the ring. Then the $(ij)$th quasideterminant of X is defined by the formula
\begin{equation*}
|\mathrm X|_{ij}=
          \left| \begin{array}{ccccc}
             x_{11} & \ldots & x_{1j} & \ldots & x_{1n} \\
              & \ldots &  & \ldots &  \\
             x_{i1} & \ldots &\boxed{x_{ij}} & \ldots & x_{in} \\
              & \ldots &  & \ldots &  \\
             x_{n1} & \ldots & x_{nj} & \ldots & x_{nn} \\
           \end{array}
         \right|=(\mathrm X^{-1})_{ji}^{-1},
\end{equation*}
where the first or the second notation with $\boxed{x_{ij}}$ denotes the quasideterminant.
\end{definition}

When $n\geq 2$, and let $\mathrm X^{ij}$ be the $(n-1)\times(n-1)$-matrix obtained from X by
deleting the $i$th row and $j$th column. In general $\mathrm X^{i_1\cdots i_r, j_1\cdots j_r}$ denotes
the submatrix obtained from $\mathrm X$ by deleting the $i_1, \cdots, i_r$-th rows, and
$i_1, \cdots, i_r$-th columns. Then
$$|\mathrm X|_{ij}=x_{ij}-\sum_{i', j'} x_{ii'}(|\mathrm X^{ij}|_{j'i'})x_{j'j},$$
where the sum runs over $i'\notin I\setminus\{i\}, j'\notin J\setminus\{j\}$.

\begin{theorem}\label{quasidet}
Let $T$ be the matrix of generators $t_{ij}$ of $\mathscr{F}_R(M(n))$, $\sigma=i_1\ldots i_n$ and $\tau=j_1\ldots j_n$ be two permutations of $S_n$.
In the ring of fractions of $\mathscr{F}_R(M(n))$, one has that
 \begin{equation}\label{det quasidet2}
 \det(T)=\frac{\prod_{k=1}^{n}\mu_l(\sign({I_k}^c;i_k))}{\prod_{k=1}^{n}\mu_r(\sign({J_k}^c;j_k))}
t_{i_nj_n} \ldots |T^{i_1j_1}|_{i_2j_2} |T|_{i_1j_1}
 \end{equation}
where $I_k=\{i_1,\ldots,i_k\}$, $J_k=\{j_1,\ldots,j_k\}$.
\end{theorem}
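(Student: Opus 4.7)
The plan is to prove \eqref{det quasidet2} by induction on $n$, peeling the quasideterminants off the right end one at a time. The base case $n=1$ is trivial: $\det=t_{i_1 j_1}=|T|_{i_1 j_1}$ with empty $\sign$-products. The crux of the inductive step is a \emph{one-step factorization} of the form
\begin{equation*}
\det(T) \;=\; \frac{\mu_l(\sign(\hat{i_1};\{i_1\}))}{\mu_r(\sign(\hat{j_1};\{j_1\}))}\,\xi^{\hat{i_1}}_{\hat{j_1}}\,|T|_{i_1 j_1}
\end{equation*}
(the precise placement of the scalar may differ by a dynamical shift), which separates the dynamical quantum determinant of the complementary submatrix $T^{i_1 j_1}$ from the quasideterminant $|T|_{i_1 j_1}$ at the distinguished entry.

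To establish this one-step identity I would invoke the antipode formula of the last Proposition of Section~2, namely
\begin{equation*}
S(t_{j_1 i_1}) \;=\; {\det}^{-1}\,\frac{\mu_l(\sign(\hat{i_1};\{i_1\}))}{\mu_r(\sign(\hat{j_1};\{j_1\}))}\,\xi^{\hat{i_1}}_{\hat{j_1}}.
\end{equation*}
In the ring of fractions of $\mathscr{F}_R(M(n))$ the antipode realises the matrix inverse of $T$, so that $S(t_{j_1 i_1})=(T^{-1})_{j_1 i_1}$; the quasideterminant is its inverse by definition, $|T|_{i_1 j_1}=((T^{-1})_{j_1 i_1})^{-1}$, and rearranging yields the displayed factorization. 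The same identity can alternatively be read off directly from Proposition~\ref{det lap}: expanding $\det$ along column $j_1$ produces $\det=\sum_k(\text{scalar})\,t_{k j_1}\xi^{\hat{k}}_{\hat{j_1}}$, and factoring $\xi^{\hat{i_1}}_{\hat{j_1}}$ out on the right collapses the remaining sum to $|T|_{i_1 j_1}$ via the Gelfand--Retakh recursion.

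Applying the one-step formula inside the smaller matrix $T^{i_1 j_1}$, whose dynamical quantum determinant equals $\xi^{\hat{i_1}}_{\hat{j_1}}$, and iterating along the sequence $(i_k,j_k)_{k=1}^{n}$, produces the ordered product $t_{i_n j_n}\cdots|T^{i_1 j_1}|_{i_2 j_2}|T|_{i_1 j_1}$. Because $\sigma,\tau$ are permutations of $[1,n]$, at stage $k$ the row complement is $I_k^c=\{i_{k+1},\ldots,i_n\}$ and the column complement is $J_k^c=\{j_{k+1},\ldots,j_n\}$, so the scalar produced at the $k$-th recursion step is exactly the $k$-th factor $\mu_l(\sign(I_k^c;i_k))/\mu_r(\sign(J_k^c;j_k))$ of the advertised prefactor.

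The main obstacle is the bookkeeping of the dynamical shifts. Because $f(\underline{\lambda})t_{ij}=t_{ij}f(\underline{\lambda}+\omega(i))$ and $f(\underline{\mu})t_{ij}=t_{ij}f(\underline{\mu}+\omega(j))$, the scalars $\mu_l(\sign),\mu_r(\sign)$ do not commute through the $t_{ij}$'s or through the quasideterminants, so collecting all the step-by-step scalars to the front of the product shifts each of their arguments. Verifying that after these shifts the accumulated prefactor coincides with the unshifted product $\prod_k\mu_l(\sign(I_k^c;i_k))/\prod_k\mu_r(\sign(J_k^c;j_k))$ written in \eqref{det quasidet2} is the only delicate point; it reduces to iterated application of the moment-map commutation relations together with the bigraded structure of $\mathscr{F}_R(M(n))$.
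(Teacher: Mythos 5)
Your proposal follows essentially the same route as the paper: the paper also derives the one-step factorization $\det(T)=\frac{\mu_l(\sign(\hat{i};\{i\}))}{\mu_r(\sign(\hat{j};\{j\}))}\,\xi^{\hat{i}}_{\hat{j}}\,|T|_{ij}$ by inverting the antipode formula $|T|_{ij}=S(t_{ji})^{-1}$ and then concludes by induction on $n$, exactly as you outline. The dynamical-shift bookkeeping you flag as the delicate point is in fact left implicit in the paper's proof as well, so your treatment is no less complete than the original.
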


\begin{proof}
By definition the quasi-determinants of $T$ are  inverses of the entries of $S(T)$,
\begin{equation}
\begin{aligned}
|T|_{ij}=S(t_{ji})^{-1}&
={\xi^{\hat{i}}_{\hat{j}}}^{-1} \frac{\mu_r(\sign(\hat{j};\{j\}))}{\mu_l(\sign(\hat{i};\{i\}))}
\det(T),
\end{aligned}
\end{equation}
then
\begin{equation}
\begin{aligned}
\det(T)=\frac{\mu_l(\sign(\hat{i};\{i\}))}{\mu_r(\sign(\hat{j};\{j\}))} {\xi^{\hat{i}}_{\hat{j}}}|T|_{ij}.
\end{aligned}
\end{equation}
Eqs (\ref{det quasidet2}) follows from induction on $n$.
\end{proof}
\begin{remark}
If $i_k=j_k=n+1-k$ for any $k$, all the factors on the right hand side of \ref{det quasidet2} commute with each other.
In general the factors do not commute.
\end{remark}

\section{Dynamical quantum Pfaffians}

First we review the general theory of the Pfaffian \cite{JZ1, JZ2}, and we assume
the minimum condition here. Let $\mathcal{B}$ be the algebra generated by the elements
$b_{ij}$ for $1\leq i<j\leq 2n$,
and a copy of $M_{\mathfrak{h}^*}$ embedded as a subalgebra, its elements denoted by $f(\underline{\lambda})$.
The dynamical quantum Pfaffian is defined by
\begin{align*}
\Pf (B)
=\sum_{\sigma\in \Pi}S(\sigma)b_{\sigma(1)\sigma(2)}b_{\sigma(3)\sigma(4)}\cdots b_{\sigma(2n-1)\sigma(2n)},
\end{align*}
where $S(\sigma)=S(\sigma,[1,2n])$, $\Pi$ is the set of permutations $\sigma$ of $2n$ such that
$\sigma(2i-1)<\sigma(2i), i=1,\ldots,n.$

For any two disjoint subsets $I_1, I_2$ of $[1,2n]$, we define the dynamical quantum sign functions $\sign(I_1,I_2)$ and $\widetilde{\sign}(I_1,I_2)$ by
\begin{equation}
\begin{split}
\sign(I_1,I_2)=\prod_{k>l;k\in I_1,l\in I_2}(-h(\lambda_k-\lambda_l)),\\
\widetilde{\sign}(I_1,I_2)=\prod_{k<l;k\in I_1,l\in I_2}(-h(\lambda_k-\lambda_l)).
\end{split}
\end{equation}

Let $I=\{i_1,i_2,\dots,i_k\}$ with $1\leq i_1<i_2<\dots,i_k\leq 2n$. Denote by $B_I$ the submatrix of $B$ with the rows and  columns indexed by $I$.
The following result gives an iterative algorithm to compute the dynamic Pfaffian.

\begin{proposition}\label{Pfaffian-Lap}
For each $0\leq t\leq n$ we have that
\begin{equation}
\Pf(B)=\sum_{I}\sign(I;I^c)\Pf(B_{I})\Pf(B_{I^c}),
\end{equation}
where the sum runs over all subsets $I$  of $[1,2n]$ such that $|I|$=2t
and $I^c$ is the complement of $I$.
\end{proposition}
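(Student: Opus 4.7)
The plan is to expand the defining sum of $\Pf(B)$ and regroup its summands according to which set of $2t$ indices appears in the first $t$ pairs of the pairing. Given $\sigma\in\Pi$, set $J(\sigma)=\{\sigma(1),\sigma(2),\ldots,\sigma(2t)\}$, a subset of $[1,2n]$ of cardinality $2t$. The crucial observation I will use is that for any fixed $J$ of size $2t$, the permutations $\sigma\in\Pi$ with $J(\sigma)=J$ are in bijection with pairs $(\tau_1,\tau_2)\in\Pi_J\times\Pi_{J^c}$, where $\Pi_J$ denotes the analogue of $\Pi$ for the index set $J$: here $\tau_1$ records the first $t$ pairs and $\tau_2$ the last $n-t$ pairs. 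This bijection lets me decompose each summand of $\Pf(B)$ into the product of one $\Pf(B_J)$-summand and one $\Pf(B_{J^c})$-summand.

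The first step is to note that under this bijection the monomial part factors directly,
\begin{equation*}
b_{\sigma(1)\sigma(2)}\cdots b_{\sigma(2n-1)\sigma(2n)}=\bigl(b_{\tau_1(1)\tau_1(2)}\cdots b_{\tau_1(2t-1)\tau_1(2t)}\bigr)\bigl(b_{\tau_2(1)\tau_2(2)}\cdots b_{\tau_2(2n-2t-1)\tau_2(2n-2t)}\bigr),
\end{equation*}
which is immediate from the pair structure. The second and key step is to prove the sign identity
\begin{equation*}
S(\sigma,[1,2n])=\sign(J;J^c)\,S(\tau_1,J)\,S(\tau_2,J^c).
\end{equation*}
For this I will partition the inversions $\{(k,l):k<l,\ \sigma(k)>\sigma(l)\}$ of $\sigma$ into three types depending on whether both positions are $\leq 2t$, both are $>2t$, or one of each. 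Inversions of the first type are exactly the inversions of $\tau_1$ and contribute $S(\tau_1,J)$; the second type contributes $S(\tau_2,J^c)$. For cross inversions with $k\leq 2t<l$, as $(k,l)$ varies the pair $(\sigma(k),\sigma(l))$ ranges bijectively over $J\times J^c$, and the inversion condition selects exactly those $(a,b)\in J\times J^c$ with $a>b$; the resulting product of $-h(\lambda_a-\lambda_b)$ factors is precisely $\sign(J;J^c)$, independent of $\tau_1$ and $\tau_2$.

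The hard part will be this sign decomposition, and in particular verifying that all three factors are evaluated at the same $\underline{\lambda}$ with no stray dynamical shifts between them, so the factorization is a genuine identity in $M_{\mathfrak h^*}$ before any reordering past the $b_{ij}$'s. Once this step is complete, summing the factored expression over $(\tau_1,\tau_2)\in\Pi_J\times\Pi_{J^c}$ yields $\sign(J;J^c)\,\Pf(B_J)\Pf(B_{J^c})$, and a final sum over all $J\subset[1,2n]$ with $|J|=2t$ gives the claimed Laplace expansion. The boundary cases $t=0$ and $t=n$ are trivial since $\Pf(B_\emptyset)=1$ and $\sign(\emptyset;\cdot)=\sign(\cdot;\emptyset)=1$.
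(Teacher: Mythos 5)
Your argument is correct in substance but takes a genuinely different route from the paper. The paper works inside the comodule exterior algebra: it sets $\Omega=\sum_{i<j}w_iw_j\ot b_{ij}$ in $W\ot\mathcal{B}$, identifies $\bigwedge^n\Omega$ with $w_1\wedge\cdots\wedge w_{2n}\ot \Pf(B)$, and then factors $\Omega^n=\Omega^t\wedge\Omega^{n-t}$, using $w_Iw_{I^c}=\sign(I;I^c)\,w_1\cdots w_{2n}$ and the vanishing of $w_Iw_J$ for $J\neq I^c$ to read off the expansion. You instead expand the defining sum of $\Pf(B)$ and regroup by $J(\sigma)=\{\sigma(1),\dots,\sigma(2t)\}$. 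Your bijection with $\Pi_J\times\Pi_{J^c}$ is valid for the paper's stated convention on $\Pi$ (only $\sigma(2i-1)<\sigma(2i)$ is imposed, the pairs are not ordered among themselves; with the perfect-matching normalization the grouping would not split), and your sign identity $S(\sigma,[1,2n])=\sign(J;J^c)\,S(\tau_1,J)\,S(\tau_2,J^c)$ is exactly the partition of the inversion set of $\sigma$ into within-$J$, within-$J^c$, and cross inversions, the cross part giving $\sign(J;J^c)$ independently of $(\tau_1,\tau_2)$. Your route is purely combinatorial and never invokes $W$; the paper's $q$-form argument buys uniformity, since the same two-line computation handles $\widetilde{\Pf}$ and the hyper-Pfaffian cases.

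One step is glossed over, and your stated worry about dynamical shifts is located in the wrong place. The factorization of $S(\sigma,[1,2n])$ is an identity among elements of $M_{\mathfrak{h}^*}$ evaluated at the same $\underline{\lambda}$; nothing is moved past any generator there, so no shifts can arise and that step needs no special care. What does need justification is the final reassembly: in your regrouped expression for $\Pf(B)$ the entire coefficient $\sign(J;J^c)S(\tau_1,J)S(\tau_2,J^c)$ stands to the left of the monomial $b^{(J,\tau_1)}b^{(J^c,\tau_2)}$, whereas in the product $\Pf(B_J)\Pf(B_{J^c})$ the factor $S(\tau_2,J^c)$ sits between the two monomials; so you must commute $S(\tau_2,J^c)$ past the $b_{ij}$ with $i,j\in J$. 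This is legitimate because $S(\tau_2,J^c)$ depends only on the variables $\lambda_k$ with $k\in J^c$, while moving a function past $b_{ij}$, $i,j\in J$, shifts only $\lambda_i,\lambda_j$ (the natural relation $f(\underline{\lambda})b_{ij}=b_{ij}f(\underline{\lambda}+\omega(i)+\omega(j))$ implicit in the paper's minimal assumptions on $\mathcal{B}$; the paper needs the same compatibility for the product in $W\ot\mathcal{B}$ to be well defined). Add this observation explicitly and your proof is complete.
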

\begin{proof}
Define the tensor product of $W\ot \mathcal{B}$ to be the usual tensor product
modulo the relations
$fw\ot b=w\ot f b$.
Let $\Omega=\sum_{i<j}w_iw_j\ot b_{ij}$, then
\begin{equation}
\bigwedge^{n}\Omega=w_{1}\wedge\cdots \wedge w_{2n}\ot \Pf (B).
\end{equation}

On the other hand,

\begin{align}\notag
\bigwedge^{n}\Omega&=\Omega^t\bigwedge\Omega^{n-t}\\ \notag
&=\sum_{I,J}\left(w_I\ot \Pf(B_{I})\right) \left(w_{J}\ot \Pf(B_{J})\right)\\
&=\sum_{I,J}w_Iw_{J}\ot \Pf(B_{I})\Pf(B_{J}).
\end{align}
Note that $w_Iw_{J}$ vanishes unless $J=I^c$, therefore we have that
$$
\Pf(B)=\sum_{I}\sign(I;I^c) \Pf(B_{I})\Pf(B_{I^c}).
$$
\end{proof}

The following transformation formula establishes the relation between the quantum dynamic Pfaffian and determinant.
\begin{theorem}\label{pf eq1}
Denote by $\mathscr{F}_R(M(2n))\widetilde{\ot} \mathcal{B}$ the usual tensor product modulo the relations
$\mu_r(f)t\ot b=t\ot f b$ and $f(t\ot b)=\mu_l(f)t\ot b $.
Let $\xi^{ij}_{kl}$ be the $2\times 2$-dynamical quantum minors in $\mathscr{F}_R(M(2n))$, $b_{ij}$ the generators of $\mathcal{B}$, and $c_{ij}=\sum_{k<l}\xi^{ij}_{kl}\ot b_{kl}$.
Then in $\mathscr{F}_R(M(2n))\widetilde{\ot} \mathcal{B}$ we have
$$\Pf(C)={\det}(T)\ot \Pf(B).$$
\end{theorem}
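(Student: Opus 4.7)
The plan is to mirror the proof of Proposition \ref{Pfaffian-Lap} applied to the matrix $C=(c_{ij})$, and then relate the two $q$-form computations through the right coaction $\alpha_R: W \to W \otimes \mathscr{F}_R(M(2n))$. The crucial observation is that $\Omega_C = \sum_{i<j} w_i w_j \otimes c_{ij}$ is exactly the image of $\Omega_B = \sum_{k<l} w_k w_l \otimes b_{kl}$ under the extension $\alpha_R \otimes \id_{\mathcal{B}}$, since by the comodule-algebra property of $W$ (applied to $w_k w_l$) one has
\begin{equation*}
\alpha_R(w_k w_l) = \sum_{i<j} w_i w_j \otimes \xi^{ij}_{kl}, \qquad k<l,
\end{equation*}
so that $(\alpha_R \otimes \id_{\mathcal{B}})(\Omega_B) = \sum_{i<j, k<l} w_i w_j \otimes \xi^{ij}_{kl} \otimes b_{kl} = \Omega_C$.

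First I would establish the analog of the identity used in Proposition \ref{Pfaffian-Lap} for the new matrix: repeating verbatim the exterior-algebra expansion inside $W \widetilde{\otimes} (\mathscr{F}_R(M(2n)) \widetilde{\otimes} \mathcal{B})$ one gets $\Omega_C^n = w_1 w_2 \cdots w_{2n} \otimes \Pf(C)$, because the argument only uses the wedge relations in $W$ and the tensor product identifications $f w \otimes x = w \otimes f x$, which hold in the triple tensor product just as in the double one. Likewise $\Omega_B^n = w_1 \cdots w_{2n} \otimes \Pf(B)$.

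Next, since $\alpha_R$ is an algebra homomorphism, so is $\alpha_R \otimes \id_{\mathcal{B}}$, hence
\begin{equation*}
\Omega_C^n = (\alpha_R \otimes \id_{\mathcal{B}})(\Omega_B)^n = (\alpha_R \otimes \id_{\mathcal{B}})(\Omega_B^n) = \alpha_R(w_1\cdots w_{2n}) \otimes \Pf(B).
\end{equation*}
The top-degree coaction collapses to a single term, $\alpha_R(w_1 \cdots w_{2n}) = w_1 \cdots w_{2n} \otimes \xi^{[1,2n]}_{[1,2n]} = w_1 \cdots w_{2n} \otimes \det(T)$. Comparing with $\Omega_C^n = w_1 \cdots w_{2n} \otimes \Pf(C)$ and cancelling the top form then yields $\Pf(C) = \det(T) \otimes \Pf(B)$, as desired.

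The main obstacle I anticipate is bookkeeping rather than mathematical content: one needs to be careful that the coaction $\alpha_R \otimes \id_{\mathcal{B}}$ really is well-defined on $W \widetilde{\otimes} \mathcal{B}$ with target $W \widetilde{\otimes} \mathscr{F}_R(M(2n)) \widetilde{\otimes} \mathcal{B}$, compatibly with the moment-map identifications $\mu_r(f) t \otimes b = t \otimes f b$ and $f(t\otimes b) = \mu_l(f)t\otimes b$ defining the latter space, and that it is an algebra map when these tensor products are given their natural dynamical multiplications. Once this is verified, the proof reduces to the two purely formal Pfaffian expansions above; the multiplicativity of $\det$ on the top-form is automatic and does not require any further quantum minor identities.
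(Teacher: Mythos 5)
Your proposal is correct and is essentially the paper's own argument: the paper's elements $\delta_i=\sum_j w_j\ot t_{ji}$ are exactly $\alpha_R(w_i)$, so writing $\Omega_C=\sum\delta_i\delta_j\ot b_{ij}$ and using $\delta_1\cdots\delta_{2n}=w_1\cdots w_{2n}\ot\det(T)$ is the same as your identification $\Omega_C=(\alpha_R\ot\id)(\Omega_B)$ together with the collapse of the top-degree coaction to $\det(T)$. The only difference is presentational — you invoke the comodule-algebra property of $\alpha_R$ explicitly where the paper leaves it implicit.
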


\begin{proof}
 Let $w\ot t\ot b $ be an element of $W\widetilde{\ot }\mathscr{F}_R(M(n))\widetilde{\ot}\mathcal{B}$,
then
\begin{equation}
\begin{split}
fw\ot t\ot b= w\ot \mu_l(f)t\ot b=w\ot f(t\ot b),\\
f(w\ot t)\ot b=w\ot \mu_r(f)t\ot b=w\ot t\ot f(b).
\end{split}
\end{equation}

Let $\delta_i=\sum_{j=1}^{2n}w_j\ot t_{ji}$, and consider the element
$\Omega=\sum_{i, j} w_iw_j\ot c_{ij}$. It is clear that
\begin{equation}\label{hyperpf1}\Omega^n=w_1\cdots w_{2n}\ot \Pf(C),
\end{equation}
where the product is the wedge product among $w_i$.
On the other hand,
$\Omega=\sum \delta_{i}\delta_{j}\ot b_{ij}$. Then
\begin{equation}\label{hyperpf2}
\Omega^n=\delta_1 \cdots  \delta_{2n} \ot \Pf(B)=w_1 \cdots w_{2n}\ot {\det}(T)\ot \Pf(B).
\end{equation}

Comparing (\ref{hyperpf1}) and (\ref{hyperpf2}) we conclude that
$$\Pf(C)={\det}(T)\ot \Pf(B).$$
\end{proof}

Let $\widetilde {\mathcal{B}}$ be the algebra generated by the elements
$\widetilde b_{ji}$ for $1\leq i<j\leq 2n$,
and a copy of $M_{\mathfrak{h}^*}$ embedded as a subalgebra, its elements denoted by $f(\underline{\lambda})$.
the dynamical quantum Pfaffian is defined by
\begin{align*}
\widetilde{\Pf} (\widetilde B)
=\sum_{\sigma\in \Pi}\tilde{S}(\sigma)\widetilde b_{\sigma(2n)\sigma(2n-1)}\widetilde b_{\sigma(2n-2)\sigma(2n-3)}\cdots \widetilde b_{\sigma(2)\sigma(1)},
\end{align*}
where $\Pi$ is the set of shuffle permutations $\sigma$ of $2n$ such that
$\sigma(2i-1)<\sigma(2i), i=1,\ldots,n.$

For any two disjoint subsets $I_1, I_2$ of $\{1, 2, \ldots, 2n\}$, we define the dynamical quantum sign by
\begin{equation}
\widetilde{\sign}(I_1,I_2)=\prod_{k<l;k\in I_1,l\in I_2}(-h(\lambda_k-\lambda_l)).
\end{equation}

Similarly, we have the following statements.
\begin{proposition}\label{Pf-Lap}
For any $0\leq t\leq n$, we have that
\begin{equation}
\widetilde{\Pf}(\widetilde B)=\sum_{I}\widetilde{\sign}(I;I^c)\widetilde{\Pf}(\widetilde B_{I})\widetilde{\Pf}(\widetilde B_{I^c}),
\end{equation}
where the sum runs through all subsets $I$  of $[1,2n]$ such that $|I|$=2t.
\end{proposition}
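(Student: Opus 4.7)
My plan is to mirror the proof of Proposition \ref{Pfaffian-Lap}, swapping the roles of the algebras $W$ and $V$, and using descending (rather than ascending) monomial orderings so as to match the ``reversed'' definition of $\widetilde{\Pf}$.

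First, I would introduce the tensor product $V\widetilde{\ot}\widetilde{\mathcal{B}}$ subject to the relation $fv\ot \widetilde b=v\ot f\widetilde b$, and define the analogue of the universal two-form,
\begin{equation*}
\widetilde\Omega=\sum_{1\le i<j\le 2n} v_j v_i\ot \widetilde b_{ji}.
\end{equation*}
The monomial $v_jv_i$ with $j>i$ is chosen precisely so that the defining pattern $\widetilde b_{\sigma(2n)\sigma(2n-1)}\cdots\widetilde b_{\sigma(2)\sigma(1)}$ of $\widetilde{\Pf}$ is reproduced when $\widetilde\Omega$ is raised to a power. The idea is then to evaluate $\widetilde\Omega^n$ in two ways and compare coefficients of the canonical descending monomial $v_{2n}v_{2n-1}\cdots v_1$.

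Second, I would compute $\widetilde\Omega^n$ directly: the relations $v_i^2=0$ and $v_iv_j=-h(\lambda_i-\lambda_j)v_jv_i$ for $i<j$ force every nonzero term to have distinct indices, and a sorting argument on each shuffle $\sigma$ produces a scalar $\tilde S(\sigma)$. One must check that the inversions $(k,l)$ with $k<l$ and $\sigma(k)>\sigma(l)$ of the shuffle correspond exactly to the adjacent transpositions needed to convert $v_{\sigma(2n)}\cdots v_{\sigma(1)}$ into $v_{2n}\cdots v_1$, each contributing $-h(\lambda_{\sigma(l)}-\lambda_{\sigma(k)})$, which matches the definition of $\tilde S(\sigma,[1,2n])$. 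This gives
\begin{equation*}
\widetilde\Omega^n = v_{2n}v_{2n-1}\cdots v_1\ot \widetilde{\Pf}(\widetilde B).
\end{equation*}
The same reasoning applied to subsets yields, for each $0\le t\le n$,
\begin{equation*}
\widetilde\Omega^t=\sum_{|I|=2t} v_{i_{2t}}v_{i_{2t-1}}\cdots v_{i_1}\ot \widetilde{\Pf}(\widetilde B_I),\qquad I=\{i_1<\cdots<i_{2t}\}.
\end{equation*}

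Third, I would factor $\widetilde\Omega^n=\widetilde\Omega^t\cdot\widetilde\Omega^{n-t}$. In the resulting double sum, a term indexed by $(I,J)$ vanishes unless $I\cap J=\emptyset$ (else some $v_i^2=0$ kills it), which forces $J=I^c$. It remains to reorder $v_{i_{2t}}\cdots v_{i_1}\cdot v_{j_{2n-2t}}\cdots v_{j_1}$ into $v_{2n}\cdots v_1$. Since both factors are already in descending order internally, only ``inter-block'' swaps occur, one for each pair $(i,j)\in I\times I^c$ with $i<j$; each such swap contributes $-h(\lambda_i-\lambda_j)$. The total scalar is exactly
\begin{equation*}
\prod_{k<l,\,k\in I,\,l\in I^c}(-h(\lambda_k-\lambda_l))=\widetilde{\sign}(I;I^c).
\end{equation*}
Equating the coefficients of $v_{2n}v_{2n-1}\cdots v_1$ in the two expressions for $\widetilde\Omega^n$ gives the proposition.

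The main obstacle, as in the non-tilde case, is purely combinatorial: justifying that the inversion counts in the two sign-bookkeepings (once for the shuffle $\sigma$ within each block and once for the inter-block rearrangement $I,I^c$) truly produce $\tilde S(\sigma)$ and $\widetilde{\sign}(I;I^c)$ respectively. The potential subtlety of spectral-parameter shifts from $f(\underline\lambda)v_k=v_kf(\underline\lambda+\omega(k))$ is absorbed by the tensor relation $fv\ot\widetilde b=v\ot f\widetilde b$, which lets scalars produced on the $V$-side pass cleanly into $\widetilde{\mathcal B}$ once the terminal monomial $v_{2n}\cdots v_1$ is fixed, so no ambiguity in evaluation point arises.
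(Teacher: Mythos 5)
Your proof is essentially the paper's own argument: the paper likewise evaluates the $n$-th power of a universal two-form in two ways, uses the vanishing of the product of $v$-monomials unless the index sets are complementary, and collects $\widetilde{\sign}(I;I^c)$ from the inter-block reordering; the only (immaterial) difference is that it works in $\widetilde{\mathcal{B}}\ot V$ with $\widetilde{\Omega}=\sum_{i>j}\widetilde b_{ij}\ot v_iv_j$, so the Pfaffians land in the left tensor factor rather than the right. Your sign bookkeeping for $\tilde S(\sigma)$ and $\widetilde{\sign}(I;I^c)$ matches the paper's computation step for step.
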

\begin{proof}
Define the tensor product of $\widetilde {\mathcal{B}}\ot V$ to be the usual tensor product
modulo the relations
$f b\ot v=b\ot f v$.
Let $\widetilde {\Omega}=\sum_{i>j}b_{ij}\ot v_iv_j$, then
\begin{equation}
\bigwedge^{n} \widetilde{\Omega}=\widetilde{\Pf} (\widetilde B)\otimes v_n \wedge\cdots \wedge v_1.
\end{equation}

On the other hand,

\begin{align}\notag
\bigwedge^{n} \widetilde {\Omega} &=\widetilde{\Omega}^t\bigwedge\widetilde{\Omega}^{n-t}\\ \notag
&=\sum_{I,J}\left(\widetilde{\Pf}(\widetilde{B}_{I})\ot v_I\right) \left(\widetilde{\Pf}(\widetilde B_{J})\ot v_J\right)\\
&=\sum_{I,J}\widetilde{\Pf}(\widetilde B_{I})\widetilde{\Pf}(\widetilde B_{J})\ot v_Iv_{J}.
\end{align}
It is easy to see that $v_Iv_{J}$ vanishes unless $J=I^c$.
Thus we conclude that
$$
\widetilde{\Pf}(\widetilde B)=\sum_{I}\widetilde{\sign}(I;I^c)\Pf(\widetilde B_{I})\Pf(\widetilde B_{I^c}).
$$
\end{proof}

\begin{theorem}\label{pf eq2}
Denotes by $\widetilde{\mathcal{B}} \widetilde{\ot} \mathscr{F}_R(M(2n))$ the usual tensor product modulo the relations
$b\ot \mu_l(f)t=f b\ot t$ and $f(b\ot t)=b\ot \mu_r(f)t $.
Let $\xi^{kl}_{ij}$ be the dynamical quantum minor in $\mathscr{F}_R(M(n))$, %$\widetilde b_{ji}$ be the generators of $\widetilde{\mathcal{B}}$,
$c_{ji}=\sum_{k<l}\widetilde b_{lk}\ot \xi^{kl}_{ij}$.
Then in $\widetilde{\mathcal{B}} \widetilde{\ot} \mathscr{F}_R(M(2n))$ we have $\widetilde{\Pf}(C)=\widetilde{\Pf}(\widetilde B)\ot {\det}(T)$.
\end{theorem}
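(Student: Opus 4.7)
The strategy is to mirror the proof of Theorem \ref{pf eq1} under the symmetry that exchanges the right comodule algebra $W$ with the left comodule algebra $V$, swaps $\alpha_R$ with $\alpha_L$, interchanges the roles of $\mu_l$ and $\mu_r$, and reverses the order of every product. Concretely, I first enlarge the ambient space to the triple tensor product $\widetilde{\mathcal{B}} \widetilde{\ot} \mathscr{F}_R(M(2n)) \widetilde{\ot} V$, where the intermediate copy of $\mathscr{F}_R(M(2n))$ absorbs moment maps from both sides so that
\begin{equation*}
b \ot \mu_l(f)t \ot v = fb \ot t \ot v, \qquad b \ot \mu_r(f)t \ot v = b \ot t \ot fv,
\end{equation*}
together with the analogous rules for the $\mathfrak{h}$-action. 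Inside this space I consider the element
\begin{equation*}
\widetilde{\Omega} = \sum_{i>j} c_{ij} \ot v_i v_j,
\end{equation*}
where $c_{ij}$ (with $i>j$) is the entry determined by the recipe $c_{ji} = \sum_{k<l} \widetilde{b}_{lk} \ot \xi^{kl}_{ij}$.

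Computing $\widetilde{\Omega}^{\wedge n}$ the first way is a direct application of Proposition \ref{Pf-Lap} to the matrix $C$, yielding
\begin{equation*}
\bigwedge^{n}\widetilde{\Omega} = \widetilde{\Pf}(C) \ot v_{2n}\wedge \cdots \wedge v_1.
\end{equation*}
For the second way, I substitute the definition of $c_{ij}$ and introduce the coaction elements $\delta_i = \alpha_L(v_i) = \sum_j t_{ij} \ot v_j$ in $\mathscr{F}_R(M(2n))\widetilde{\ot} V$. The identity $\alpha_L(v_I) = \sum_{|K|=|I|} \xi^I_K \ot v_K$ specialized to $|I|=2$ gives the rewriting
\begin{equation*}
\widetilde{\Omega} = \sum_{l>k}\widetilde{b}_{lk} \ot \delta_l \delta_k,
\end{equation*}
so that iterating the wedge product $n$ times and reapplying Proposition \ref{Pf-Lap} (now with the $\delta$'s playing the role of the vector factor) leads to
\begin{equation*}
\bigwedge^{n}\widetilde{\Omega} = \widetilde{\Pf}(\widetilde{B}) \ot \delta_{2n}\cdots \delta_1 = \widetilde{\Pf}(\widetilde{B}) \ot \det(T) \ot v_{2n}\wedge\cdots\wedge v_1,
\end{equation*}
where the last equality is the familiar fact that $\alpha_L$ applied to the top form in $V$ produces $\det(T)$ as the scaling constant. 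Comparing the two expressions and cancelling the (nonzero) top wedge $v_{2n}\wedge\cdots\wedge v_1$ yields the claim.

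The main technical obstacle is the bookkeeping needed to justify the reshuffling $\widetilde{\Omega} = \sum \widetilde{b}_{lk} \ot \delta_l \delta_k$: the naive expansion $\delta_l \delta_k = \sum_{a,b} t_{la}t_{kb}\ot v_a v_b$ contains terms with $a=b$ and $a<b$, which must be straightened using the quadratic relations $v_a v_b = -h(\lambda_a - \lambda_b)v_b v_a$ and then identified with the correct $2\times 2$ dynamical minor $\xi^{kl}_{ji}$ appearing in $c_{ij}$. The moment-map constraints in the triple tensor product must be tracked carefully at each straightening step so that the $h$-factors land on the correct side (coming from $\mu_l$ through $\xi^{kl}_{ji}$ rather than $\mu_r$), but once this is verified the proof is a direct transcription of the pattern established in Theorem \ref{pf eq1}.
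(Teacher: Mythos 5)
Your proposal is correct and follows essentially the same route as the paper: form the triple tensor product $\widetilde{\mathcal{B}}\widetilde{\ot}\mathscr{F}_R(M(2n))\widetilde{\ot}V$, compute $\bigwedge^{n}\widetilde{\Omega}$ once directly to get $\widetilde{\Pf}(C)\ot v_{2n}\cdots v_1$ and once via $\delta_i=\alpha_L(v_i)$ to get $\widetilde{\Pf}(\widetilde B)\ot\det(T)\ot v_{2n}\cdots v_1$, then compare coefficients of the top form. Your added remarks on straightening $\delta_l\delta_k$ into the $2\times2$ minors simply make explicit what the paper leaves implicit in the identity $\widetilde{\Omega}=\sum \widetilde b_{lk}\ot\delta_l\delta_k$.
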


\begin{proof}
Let $b\ot t\ot v $ be an element of $\mathcal{B}\widetilde{\ot }\mathscr{F}_R(M(n))\widetilde{\ot}V$,
then
\begin{equation}
\begin{split}
b\ot t\ot f v=b \ot \mu_r(f)t\ot v=f(b \ot t)\ot v,\\
b\ot f(t\ot v) =b\ot \mu_l(f)t\ot v=f(b)\ot t\ot v.
\end{split}
\end{equation}

Let $\delta_i=\sum_{j=1}^{2n}t_{ij}\ot v_{j}$, and consider the element
$\widetilde {\Omega}=\sum  c_{ji}\ot v_j v_i$. It is clear that
\begin{equation}\label{hyperpf1}
\widetilde{\Omega}^n=\widetilde{\Pf}(C)\ot v_{2n}\cdots v_{1}.
\end{equation}

On the other hand,
$\widetilde{\Omega}=\sum b_{lk}\ot \delta_{l}\delta_{k}$. Then
\begin{equation}\label{hyperpf2}
\widetilde{\Omega}^n=\widetilde{\Pf}(C)\ot  \delta_{2n} \cdots  \delta_{1}=\widetilde{\Pf}(\widetilde B)\ot {\det}(T)\ot  \delta_{2n} \cdots  \delta_{1}.
\end{equation}

Comparing (\ref{hyperpf1}) and (\ref{hyperpf2}) we conclude that
$$\widetilde{\Pf}(C)=\widetilde{\Pf}(\widetilde B)\ot {\det}(T).$$
\end{proof}

We now generalize the notion of the dynamical quantum Pfaffian to the dynamical quantum hyper-Pfaffian.
Let $\mathcal{B}$ be the algebra generated by the elements
 $b_{i_1\cdots i_{m}},i_1<i_2<\cdots<i_m, 1\leq i_k\leq mn,k=1,\ldots, m$,
and a copy of $M_{\mathfrak{h}^*}$ embedded as a subalgebra, its elements denoted by $f(\underline{\lambda})$.
The dynamical quantum hyper-Pfaffian is defined by

\begin{align*}
{\Pf}_m(B)
=\sum_{\sigma\in \Pi}{S}(\sigma)b_{\sigma(1)\cdots\sigma(m)}\cdots b_{\sigma(m(n-1)+1)\cdots\sigma(mn)},
\end{align*}
where $\Pi$ is the set of permutations $\sigma$ of $mn$ such that
$\sigma((k-1)m+1)<\sigma((k-1)m+2)<\cdots <\sigma(km), k=1,\ldots,n.$

Note that the dynamical hyper-Pfaffian uses only the entries $b_{i_1\cdots i_m}$, where
$i_1<\cdots <i_m$. Clearly ${\Pf}_2(B)={\Pf}(B)$, the quantum dynamical Pfaffian.

\begin{proposition}\label{hyPfaffian-Lap}
For any $0\leq t\leq n$,
\begin{equation}
{\Pf}_m(B)=\sum_{I}\sign(I;I^c){\Pf}_m(B_{I}){\Pf}_m(B_{I^c}),
\end{equation}
where the sum is taken over all subset $I$  of $[1,mn]$ such that $|I|=mt$.
\end{proposition}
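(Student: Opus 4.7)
The plan is to mirror the proof of Proposition \ref{Pfaffian-Lap}, replacing the quadratic form $\Omega$ with an $m$-form and passing from squared to $m$-th powers of the $w_i$'s. Work inside $W \widetilde{\otimes} \mathcal{B}$ with the tensor product relation $fw \otimes b = w \otimes fb$, and introduce
\begin{equation*}
\Omega_m = \sum_{i_1 < i_2 < \cdots < i_m} w_{i_1} w_{i_2} \cdots w_{i_m} \otimes b_{i_1 \cdots i_m}.
\end{equation*}
First I would establish the key identity
\begin{equation*}
\Omega_m^n = w_1 w_2 \cdots w_{mn} \otimes \Pf_m(B).
\end{equation*}
This comes directly from the definition of $\Pf_m(B)$: expanding $\Omega_m^n$ produces a sum over all ways of partitioning $[1,mn]$ into $n$ ordered blocks of size $m$, which is exactly indexed by $\Pi$, and the coefficient of $w_1 \cdots w_{mn}$ picked up when reordering $w_{\sigma(1)} \cdots w_{\sigma(mn)}$ into canonical order is precisely $S(\sigma) = S(\sigma, [1,mn])$ by the defining relations $w_j w_i = -h(\lambda_j - \lambda_i) w_i w_j$.

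Next, by associativity I would split $\Omega_m^n = \Omega_m^t \cdot \Omega_m^{n-t}$. Applying the same reasoning as above to the two factors separately, and grouping the expansion by the underlying index set, gives
\begin{equation*}
\Omega_m^t = \sum_{|I| = mt} w_I \otimes \Pf_m(B_I), \qquad \Omega_m^{n-t} = \sum_{|J| = m(n-t)} w_J \otimes \Pf_m(B_J),
\end{equation*}
where $w_I$ denotes $w_{i_1} \cdots w_{i_{mt}}$ in increasing order. Multiplying,
\begin{equation*}
\Omega_m^n = \sum_{I,J} w_I w_J \otimes \Pf_m(B_I)\Pf_m(B_J).
\end{equation*}
Since $w_i^2 = 0$ in $W$, the product $w_I w_J$ vanishes whenever $I \cap J \neq \emptyset$, so only $J = I^c$ survives. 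For the surviving terms, reordering $w_I w_{I^c}$ into the canonical order $w_1 \cdots w_{mn}$ produces exactly the dynamical sign $\sign(I; I^c)$, since each transposition needed pairs an index $k \in I^c$ past an index $l \in I$ with $k < l$, contributing $-h(\lambda_l - \lambda_k)$ — equivalently, the factors $(-h(\lambda_k - \lambda_l))$ for $k > l$, $k \in I$, $l \in I^c$.

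Comparing this expansion with the identity of the first paragraph and using that $w_1 \cdots w_{mn}$ is a free element (i.e.\ cancellable) yields
\begin{equation*}
\Pf_m(B) = \sum_{|I| = mt} \sign(I; I^c)\, \Pf_m(B_I) \Pf_m(B_{I^c}).
\end{equation*}
The main obstacle is the careful bookkeeping in step one, namely verifying that the sign function $S(\sigma)$ emerging from the dynamical relations in $W$ matches the definition of $\Pf_m(B)$ when one permutes an arbitrary ordering of $w$'s arranged in $n$ blocks of length $m$ into canonical order; once this is settled, the Laplace-type expansion follows from the vanishing of squares and a purely combinatorial sign computation.
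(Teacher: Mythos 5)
Your argument is correct and is exactly the approach the paper intends: the paper states Proposition \ref{hyPfaffian-Lap} without proof, leaving it as the evident generalization of the proof of Proposition \ref{Pfaffian-Lap}, and your use of the $m$-form $\Omega_m=\sum_{i_1<\cdots<i_m}w_{i_1}\cdots w_{i_m}\ot b_{i_1\cdots i_m}$, the identity $\Omega_m^n=w_1\cdots w_{mn}\ot \Pf_m(B)$, the splitting $\Omega_m^n=\Omega_m^t\Omega_m^{n-t}$, and the vanishing of $w_Iw_J$ for $J\neq I^c$ reproduces that method faithfully, including the identification of the reordering factor with $\sign(I;I^c)$.
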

%\begin{proof}
%Define the tensor product of $W\ot \mathcal{B}$ to be the usual tensor product.
%Let $\Omega=\sum_{i_1<i_2<\cdots<i_m}w_{i_1}\wedge\cdots\wedge w_{i_m}\ot b_{i_1\cdots i_m}$, then
%\begin{equation}
%\bigwedge^{n}\Omega=w_{1}\wedge\cdots \wedge w_{mn}\ot {\Pf}_m (B).
%\end{equation}
%
%On the other hand,
%
%\begin{align}\notag
%\bigwedge^{n}\Omega &=\Omega^t\bigwedge\Omega^{n-t}\\ \notag
%&=\sum_{I,J}\left(w_I\ot {\Pf}_m(B_{I})\right) \left(w_{J}\ot {\Pf}_m(B_{J})\right)\\
%&=\sum_{I,J}w_Iw_{J}\ot {\Pf}_m(B_{I}){\Pf}_m(B_{J}).
%\end{align}
%It is easy to see that $w_Iw_{J}$ vanishes unless $J=I^c$.
%Thus we conclude that
%$$
%{\Pf}_m(B)=\sum_{I}{\sign}(I;I^c) {\Pf}_m(B_{I}){\Pf}_m(B_{I^c}).
%$$
%\end{proof}

Let $I=\{i_1,i_2,\cdots,i_m\}$ such that $i_1<i_2<\cdots <i_m$. We denote by $b_I$ the element $b_{i_1\cdots i_m}$.
\begin{theorem}\label{hypf eq1}
Denotes by $\mathscr{F}_R(M(mn))\widetilde{\ot} \mathcal{B}$ the usual tensor product modulo the relations
$\mu_r(f)t\ot b=t\ot f b$ and $f(t\ot b)=\mu_l(f)t\ot b $.
Let $\xi^{I}_{J}$ be the dynamical quantum minor in $\mathscr{F}_R(M(n))$, $c_{I}=\sum_{J}\xi^{I}_{J}\ot b_{J}.$
Then in $\mathscr{F}_R(M(mn))\widetilde{\ot} \mathcal{B}$ we have
${\Pf}_m(C)={\det}(T)\ot {\Pf}_m(B)$.
\end{theorem}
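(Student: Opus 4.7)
The plan is to mimic, step for step, the two-column computation that proved Theorem~\ref{pf eq1}, but replacing the bilinear form $\sum_{i<j} w_iw_j\otimes c_{ij}$ by an $m$-linear analogue. Concretely, I would work inside $W\widetilde{\ot}\mathscr{F}_R(M(mn))\widetilde{\ot}\mathcal{B}$ and introduce
\[
\Omega=\sum_{|I|=m}w_{I}\ot c_{I},
\]
where $I$ ranges over ordered subsets of $[1,mn]$ of size $m$ and $w_I=w_{i_1}\cdots w_{i_m}$ for $I=\{i_1<\cdots<i_m\}$. The strategy is to compute the wedge power $\Omega^{n}$ in two ways, compare, and read off the identity.

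First I would compute $\Omega^{n}$ directly from the definition of $\Pf_m(C)$. Expanding the product and using the dynamical exterior relations in $W$ (equation~\eqref{relation W}) to reorder each monomial $w_{I_1}w_{I_2}\cdots w_{I_n}$ into $w_1w_2\cdots w_{mn}$ pulls out exactly the sign factors $S(\sigma)$ that appear in the definition of $\Pf_m$. Since only shuffle permutations of $\{I_1,\dots,I_n\}$ survive (otherwise two $w$'s coincide and the monomial dies), this yields
\[
\Omega^{n}=w_1w_2\cdots w_{mn}\ot\Pf_m(C).
\]
This is the hyper-Pfaffian analogue of the identity $\Omega^n=w_1\cdots w_{2n}\otimes \Pf(C)$ used in Theorem~\ref{pf eq1}.

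Next I would rewrite $\Omega$ using $c_I=\sum_J\xi^I_J\ot b_J$. Swapping the order of summation and setting $\delta_j=\sum_i w_i\ot t_{ij}=\alpha_R(w_j)$, the higher-rank comodule identity $\alpha_R(w_J)=\sum_I w_I\ot \xi^I_J$ gives
\[
\Omega=\sum_{|J|=m}\delta_{J}\ot b_J,\qquad\delta_J:=\delta_{j_1}\cdots\delta_{j_m}.
\]
Because $\alpha_R\colon W\to W\ot\mathscr{F}_R(M(mn))$ is an $\mathfrak h$-comodule \emph{algebra} map, the elements $\delta_j$ satisfy exactly the same dynamical exterior relations in $W\widetilde{\ot}\mathscr{F}_R(M(mn))$ as the $w_j$ do in $W$. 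Therefore the same combinatorial calculation that produced $\Pf_m(C)$ above, applied now to the $\delta$'s, yields
\[
\Omega^{n}=\delta_1\delta_2\cdots\delta_{mn}\ot\Pf_m(B)=\alpha_R(w_1\cdots w_{mn})\ot\Pf_m(B)=w_1\cdots w_{mn}\ot\det(T)\ot\Pf_m(B),
\]
the last equality being the coaction formula $\alpha_R(w_{[1,mn]})=w_{[1,mn]}\ot\xi^{[1,mn]}_{[1,mn]}$. Comparing the two expressions for $\Omega^{n}$ and cancelling the invertible factor $w_1\cdots w_{mn}$ gives $\Pf_m(C)=\det(T)\ot\Pf_m(B)$.

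The main point where care is needed is the verification that the $\delta_j$ satisfy the dynamical exterior relations of $W$ inside the mixed tensor product: one must check that the moment-map identifications defining $\widetilde{\ot}$ correctly move the meromorphic $h$-factors across the middle tensor slot so that $\alpha_R$ is truly an algebra map from $W$ to $W\widetilde{\ot}\mathscr{F}_R(M(mn))$. Once that is in place, the matching of sign factors between the hyper-Pfaffian expansion and the wedge reordering is automatic because both are governed by the same function $S(\sigma)$ through the relations in~\eqref{relation W}, so no ad hoc sign bookkeeping is required.
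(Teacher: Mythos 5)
Your proposal is correct and is essentially the proof the paper intends: the paper leaves Theorem~\ref{hypf eq1} to the ``same method as the case $m=2$,'' i.e.\ the two-fold evaluation of $\Omega^n$ in Theorem~\ref{pf eq1}, which is exactly what you carry out with $\Omega=\sum_{|I|=m}w_I\ot c_I$, the comodule-algebra property of $\alpha_R$, and the coaction formula $\alpha_R(w_1\cdots w_{mn})=w_1\cdots w_{mn}\ot\det(T)$. Your added remark about checking that the $\delta_j$ inherit the relations~\eqref{relation W} across the moment-map identifications is the right point to flag, and it is handled exactly as in the $m=2$ proof.
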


Let $\widetilde{\mathcal{B}}$ be the algebra generated by the elements
$\widetilde b_{i_1\cdots i_{m}},i_1>i_2>\cdots>i_m, 1\leq i_k\leq mn,k=1,\ldots, m.$
Define dynamical quantum hyper-Pfaffian by

\begin{align*}
\widetilde{\Pf}_m (\widetilde B)
=\sum_{\sigma\in \Pi}\widetilde{S}(\sigma)\widetilde b_{\sigma(mn)\cdots\sigma(mn-m+1)} \widetilde b_{\sigma(mn-m)\sigma(mn-2m+1)}\cdots \widetilde b_{\sigma(m)\cdots\sigma(1)},
\end{align*}
where $\Pi$ is the set of permutations $\sigma$ of $mn$ such that
$\sigma((k-1)m+1)<\sigma((k-1)m+2)<\cdots <\sigma(km), k=1,\ldots,n.$

Clearly $\widetilde{\Pf}_2 (\widetilde B)=\widetilde{\Pf}(\widetilde B)$ discussed before.

\begin{proposition}\label{Pf-Lap}
For any $0\leq t\leq n$,
\begin{equation}
\widetilde{\Pf}_m(\widetilde B)=\sum_{I}\widetilde{\sign}(I;I^c)\widetilde{\Pf}_m(\widetilde B_{I})\widetilde{\Pf}_m(\widetilde B_{I^c}),
\end{equation}
where the sum is taken over all subset $I$  of $[1,mn]$ such that $|I|=mt$.
\end{proposition}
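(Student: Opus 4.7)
The plan is to adapt the generating-form argument used earlier for $\widetilde{\Pf}(\widetilde B)$ (the $m=2$ case) to the $m$-fold hyper-Pfaffian setting. I would work inside the $\mathfrak h$-space $\widetilde{\mathcal B}\,\widetilde{\ot}\, V$ with the tensor relation $fb\ot v=b\ot fv$ used in Theorem \ref{pf eq2}, and form the degree-$m$ element
\[
\widetilde\Omega=\sum_{i_1>i_2>\cdots>i_m}\widetilde b_{i_1\cdots i_m}\ot v_{i_1}v_{i_2}\cdots v_{i_m}.
\]
The relations $v_i^2=0$ and $v_iv_j=-h(\lambda_i-\lambda_j)v_jv_i$ for $i<j$ inside $V$ make the monomial $v_{mn}v_{mn-1}\cdots v_1$ the unique nonvanishing top-degree form (up to an $M_{\mathfrak h^*}$-scalar), and any ordered monomial of degree $mn$ collapses to such a scalar multiple of it.

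First, I would expand $\widetilde\Omega^n$ directly. For each shuffle permutation $\sigma$ of $[1,mn]$ with $\sigma((k-1)m+1)<\cdots<\sigma(km)$, the product contributes exactly one block $\widetilde b_{\sigma((k-1)m+1)\cdots\sigma(km)}$, and the swap count needed to reorder the accompanying $v_i$-factors into $v_{mn}v_{mn-1}\cdots v_1$ accumulates precisely the weight $\widetilde S(\sigma)$. This reproduces the defining formula
\[
\widetilde\Omega^n=\widetilde{\Pf}_m(\widetilde B)\ot v_{mn}v_{mn-1}\cdots v_1
\]
by the same sign-tracking as in the $m=2$ version.

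Second, I would apply associativity $\widetilde\Omega^n=\widetilde\Omega^t\cdot\widetilde\Omega^{n-t}$ and identify each factor as
\[
\widetilde\Omega^t=\sum_{|I|=mt}\widetilde{\Pf}_m(\widetilde B_I)\ot v_I,\qquad \widetilde\Omega^{n-t}=\sum_{|J|=m(n-t)}\widetilde{\Pf}_m(\widetilde B_J)\ot v_J,
\]
where $v_I$ denotes the decreasing product of the $v_i$, $i\in I$. Multiplying and invoking $v_i^2=0$ leaves only the terms with $J=I^c$. Applying $v_kv_l=-h(\lambda_k-\lambda_l)v_lv_k$ for $k<l$ to precisely those pairs $(k,l)$ with $k\in I$, $l\in I^c$, $k<l$ (pairs with $k>l$ are already in decreasing relative order and require no swap) reorders $v_Iv_{I^c}$ into $v_{mn}\cdots v_1$ with accumulated prefactor $\widetilde{\sign}(I;I^c)\in M_{\mathfrak h^*}$. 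Moving this prefactor across the tensor product via $fb\ot v=b\ot fv$ and equating the coefficient of the top form $v_{mn}\cdots v_1$ with the one-term expression from the first step yields the asserted identity.

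The main obstacle will be notational rather than structural: tracking the $\underline\lambda$-shifts produced whenever an $M_{\mathfrak h^*}$-coefficient commutes past a $v_i$ via $f(\underline\lambda)v_i=v_if(\underline\lambda+\omega(i))$, or is moved across the tensor product. These shifts are already built into the weights $\widetilde S(\sigma)$ appearing inside each $\widetilde{\Pf}_m(\widetilde B_I)$, so their cancellation is bookkeeping-heavy but proceeds exactly as in the $m=2$ version proved earlier.
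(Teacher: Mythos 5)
Your proposal is correct and is essentially the paper's own argument: the paper proves the $m=2$ case (Proposition \ref{Pf-Lap} for $\widetilde{\Pf}$) by exactly this generating-form computation with $\widetilde\Omega$ in $\widetilde{\mathcal B}\,\widetilde{\ot}\,V$, splitting $\widetilde\Omega^n=\widetilde\Omega^t\widetilde\Omega^{n-t}$ and using $v_Iv_J=0$ unless $J=I^c$, and leaves the hyper-Pfaffian case to the same method, which is what you carry out. No substantive difference.
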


Let $I=\{i_1,i_2,\cdots,i_m\}$ such that $i_1<i_2<\cdots <i_m$. We denote by $ \widetilde b_I$ the element $b_{i_m\cdots i_1}$.
The following result is proved by the similar method as in the case of $m=2$.

\begin{theorem}

Denotes by $\widetilde{\mathcal{B}} \widetilde{\ot} \mathscr{F}_R(M(mn))$ the usual tensor product modulo the relations
$b\ot \mu_l(f)t=f b\ot t$ and $f(b\ot t)=b\ot \mu_r(f)t $.
Let $\xi^{J}_{I}$ be the dynamical quantum minor in $\mathscr{F}_R(M(n))$, $c_{I}=\sum_{J} \widetilde b_{J}\ot \xi^{J}_{I}$.
Then in $\widetilde{\mathcal{B}} \widetilde{\ot} \mathscr{F}_R(M(mn))$ we have $\widetilde{\Pf}_m(C)=\widetilde{\Pf}_m( \widetilde B)\ot {\det}(T)$.
\end{theorem}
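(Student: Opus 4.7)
The plan is to directly generalize the proof of Theorem~\ref{pf eq2} (the $m=2$ case) by replacing the quadratic wedge element $\sum w_iw_j\ot b_{ij}$ with a degree-$m$ analogue. I would work in the triple tensor product $\widetilde{\mathcal{B}}\,\widetilde{\ot}\,\mathscr{F}_R(M(mn))\,\widetilde{\ot}\,V$ equipped with the same compatibility relations
\[
 b\ot t\ot fv \;=\; b\ot\mu_r(f)t\ot v \;=\; f(b\ot t)\ot v, \qquad
 b\ot f(t\ot v) \;=\; b\ot\mu_l(f)t\ot v \;=\; f(b)\ot t\ot v,
\]
used in the Pfaffian case, adopting the decreasing-index convention that matches the definitions of $\widetilde{\Pf}_m$ and $\widetilde{b}_J$.

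Setting $\delta_i=\sum_{j=1}^{mn} t_{ij}\ot v_j$ and writing $v_J=v_{j_m}v_{j_{m-1}}\cdots v_{j_1}$ and $\delta_J=\delta_{j_m}\delta_{j_{m-1}}\cdots\delta_{j_1}$ for $J=\{j_1<\cdots<j_m\}$, the fact that $V$ is a left $\mathscr{F}_R(M(mn))$-comodule algebra gives $\delta_J=\alpha_L(v_J)=\sum_I \xi^J_I\ot v_I$ (up to the sign conventions of Section~2). I would then form the degree-$m$ element
\[
 \widetilde{\Omega}\;=\;\sum_I c_I\ot v_I \;=\;\sum_I\sum_J \widetilde b_J\ot \xi^J_I\ot v_I \;=\;\sum_J \widetilde b_J\ot \delta_J,
\]
so that $\widetilde{\Omega}$ admits two equally useful presentations.

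Next I would compute $\widetilde{\Omega}^n$ in two ways. From $\widetilde{\Omega}=\sum_I c_I\ot v_I$, the wedge product $v_{I_1}v_{I_2}\cdots v_{I_n}$ vanishes unless $(I_1,\ldots,I_n)$ is an ordered partition of $[1,mn]$; gathering the surviving monomials with the $h$-factors dictated by the quadratic relations of $V$ reproduces the combinatorial definition of $\widetilde{\Pf}_m$, giving
\[
 \widetilde{\Omega}^n \;=\; \widetilde{\Pf}_m(C)\ot v_{mn}v_{mn-1}\cdots v_1.
\]
From the alternative presentation $\widetilde{\Omega}=\sum_J \widetilde b_J\ot \delta_J$, the same ordered-partition vanishing, now transported through $\alpha_L$, yields
\[
 \widetilde{\Omega}^n \;=\; \widetilde{\Pf}_m(\widetilde B)\ot \delta_{mn}\delta_{mn-1}\cdots\delta_1 \;=\; \widetilde{\Pf}_m(\widetilde B)\ot\det(T)\ot v_{mn}v_{mn-1}\cdots v_1,
\]
where the last equality is the top-degree coaction identity $\alpha_L(v_{mn}\cdots v_1)=\det(T)\ot v_{mn}\cdots v_1$. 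Comparing the two expressions and cancelling the non-zero top wedge gives $\widetilde{\Pf}_m(C)=\widetilde{\Pf}_m(\widetilde B)\ot\det(T)$.

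The hard part will be a careful sign-bookkeeping check: one must verify that under the decreasing-order convention the dynamical signs $\widetilde{S}(\sigma)$ appearing in $\widetilde{\Pf}_m$ match exactly the $h$-factors produced when reordering $v_{I_1}\cdots v_{I_n}$ into $v_{mn}\cdots v_1$ via the quadratic relations of $V$, and simultaneously when reordering the $\widetilde b_{J_k}$'s in $\widetilde{\mathcal{B}}$. Once this matching is in place, the comodule-algebra property of $V$ transports the signs faithfully through $\alpha_L$, and the two computations of $\widetilde{\Omega}^n$ agree term by term, closing the argument.
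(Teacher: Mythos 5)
Your proposal follows exactly the paper's intended argument: the paper proves this theorem by declaring it "proved by the similar method as in the case of $m=2$," i.e.\ by generalizing the proof of Theorem~\ref{pf eq2}, which is precisely what you do with the element $\widetilde{\Omega}=\sum_I c_I\ot v_I=\sum_J \widetilde b_J\ot\delta_J$ computed to the $n$-th power in two ways inside $\widetilde{\mathcal{B}}\,\widetilde{\ot}\,\mathscr{F}_R(M(mn))\,\widetilde{\ot}\,V$. The argument is correct and essentially identical in route to the paper's, with your sign-bookkeeping caveat being exactly the step the paper also leaves implicit.
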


\bigskip
\centerline{\bf Acknowledgments}
\medskip
The work is supported by National Natural Science Foundation of China (grant no. 11531004), Fapesp (grant no. 2015/05927-0)
and Humboldt foundation.
Jing acknowledges the support of
Max-Planck Institute for Mathematics in the Sciences, Leipzig during the research.

\vskip 0.1in

\bibliographystyle{amsalpha}

\begin{thebibliography}{99} %{ABC}%{ABCD}


\bibitem{CP} %[CP]
V. Chari and A. Pressley, A guide to quantum groups, Cambridge University Press, Cambridge,
1994.


\bibitem{ES}%[ES]
P. Etingof and O. Schiffmann, {\em Lectures on the dynamical Yang-Baxter equations}, Quantum
Groups and Lie Theory (Durham, 1999), London Math. Soc. Lecture Note Ser., vol. 290, Cambridge
University Press, Cambridge, 2001, pp. 89-129.

\bibitem{EV}%[EV]
P. Etingof and A. Varchenko, {\em Exchange dynamical quantum groups}, Comm. Math. Phys. 205 (1999), 19-52.

\bibitem{FRT}%[FRT]
L.~D.~Faddeev, N.~Yu.~Reshetikhin, and L.~A.~Takhtajan,
{\em Quantization of Lie groups and Lie algebras},
Algebraic analysis, Vol. I, pp. 129--139, Academic Press, Boston, MA, 1988.

\bibitem{F}%[F]
G. Felder, {\em Elliptic quantum groups}, XIth Int. Congress of Mathematical Physics (Paris, 1994), pp. 211-218, International Press, Cambridge, MA, 1995.

\bibitem{FV}%[FV]
G. Felder and A. Varchenko, {\em On representations of the elliptic quantum group $E_{\tau, \eta}(sl_2)$}, Commun.
Math. Phys. 181 (1996), 741-761.


\bibitem%[GGRW]{GGRW} I. Gelfand, V. Retakh,
{GR}%{GR}
I. Gelfand and V. Retakh,
%{\em Quasideterminants}, Advances in Mathematics 193.1 (2005): 56-141.
{\em Quasideterminants}, I, Selecta Math. (N.S.) 3 (1997), 517-546.


\bibitem{HH}%[HH]
 M. Hashimoto and T. Hayashi,
{\em Quantum multilinear algebra}. Tohoku Math. J. (2) 44 (1992), 471--521.

\bibitem{HIO}%[HIO]
L. K. Hadjiivanov, A. P. Isaev, O. V. Ogievetsky, P. N. Pyatov, and I. T. Todorov,
{\em Hecke algebraic properties of dynamical R-matrices}. Application to related quantum matrix algebras, J.
Math. Phys. 40 (1999), 427-448.

\bibitem{JZ}%[JZ]
N. Jing and J. Zhang,
{\em Quantum Pfaffians and hyper-Pfaffians}, Adv. Math. 265 (2014), 336--361.

\bibitem{JZ1}%[JZ]
N. Jing and J. Zhang,
{\em Multiparameter Quantum Pfaffians}, arXiv:1701.07458.

\bibitem{JZ2}%[JZ]
N. Jing and J. Zhang, {\em Quantum permanents and Hafnians via Pfaffians}, Lett. Math. Phys. 106 (2016), 1451-1464.

\bibitem{JZ3}%[JZ]
N. Jing and J. Zhang, {\em Capelli identity on multiparameter quantum linear groups}, Sci. China Math. 61 (2018), 253-268.

\bibitem{KN}%[KN]
E. Koelink and Y. V. Norden, {\em The dynamical $U(n)$ quantum group}, Int. J. Math. Math. Sci. 2006,
Art. ID 65279, 30 pp.


\bibitem{KN2}%[KN]
E. Koelink and Y. V. Norden, %{\em The dynamical $U(n)$ quantum group}, International journal of mathematics and mathematical sciences 2006.
{\em Pairings and actions for dynamical quantum groups}, Adv. Math. 208 (2007), 1-39.

\bibitem{L}%[L]
J.-H. Lu, {\em Hopf algebroids and quantum groupoids}, Int. J. Math. 7 (1996), 47-70.

\bibitem{Ma}%[Ma]
Yu.~I.~Manin, {\em Notes on quantum groups and quantum de Rham complexes}. Teoret. Mat. Fiz. 92 (1992), 425--450; English transl. in: Theoret. Math. Phys. 92 (1992), 997--1023.



\bibitem{S}%[S]
J. V. Stokman, {\em Vertex-IRF transformations, dynamical quantum groups, and harmonic analysis},
Indag. %ationes
Math. (N. S.) 14 (2003), 545-570.

\bibitem{T}%[T]
M. Takeuchi, {\em Groups of algebras over $A\overline{\otimes}A$}, J. Math. Soc. Japan 29
(1977), 459-492.

\bibitem{vN}%[vN]
Y. van Norden, {\em Dynamical quantum groups: duality and special functions}, thesis, TU Delft, Delft,
2005.

\bibitem{X}%[X]
P. Xu, {\em Quantum groupoids}, Commun. Math. Phys. 216 (2001), 539-581.

\bibitem{TT}%[TT]
E. Taft and J. Towber, {\em Quantum deformation of flag schemes and Grassmann schemes. I.
A $q$-deformation of the shape-algebra for $\mathrm{GL}(n)$,} J. Algebra 142 (1991), %no. 1,
1-36.

\end{thebibliography}

\end{document}